\documentclass[letterpaper, 11pt]{article}
\usepackage{amssymb,amsmath}
\usepackage{epsf}
\usepackage{times,bm,mathrsfs}
\usepackage{amsmath}
\usepackage{amssymb}
\usepackage{amsfonts}
\usepackage{mathrsfs}
\usepackage{bbm}
\usepackage{color}
\usepackage{colordvi}
\usepackage{graphicx}
\usepackage{amscd}
\usepackage{epsfig}
\usepackage{times}

\newtheorem{theorem}{Theorem}
\newtheorem{proposition}{Proposition}

\newtheorem{definition}{Definition}

\newtheorem{lemma}{Lemma}

\newtheorem{claim}{Claim}

\newenvironment{proof}{\noindent{\textbf{Proof:}}}{$\blacksquare$\vskip\belowdisplayskip}

\newenvironment{prevproof}[2]{\noindent {\bf {Proof ({#1}~\ref{#2}):}}}{$\blacksquare$\vskip \belowdisplayskip}

\definecolor{Red}{rgb}{1,0,0}
\definecolor{Blue}{rgb}{0,0,1}
\definecolor{Olive}{rgb}{0.41,0.55,0.13}
\definecolor{Green}{rgb}{0,1,0}
\definecolor{MGreen}{rgb}{0,0.8,0}
\definecolor{DGreen}{rgb}{0,0.55,0}
\definecolor{Yellow}{rgb}{1,1,0}
\definecolor{Cyan}{rgb}{0,1,1}
\definecolor{Magenta}{rgb}{1,0,1}
\definecolor{Orange}{rgb}{1,.5,0}
\definecolor{Violet}{rgb}{.5,0,.5}
\definecolor{Purple}{rgb}{.75,0,.25}
\definecolor{Brown}{rgb}{.75,.5,.25}
\definecolor{Grey}{rgb}{.5,.5,.5}
\definecolor{Black}{rgb}{0,0,0}

\setlength{\topmargin}{-0.7in}
\setlength{\textwidth}{6.5in} 
\setlength{\textheight}{9.0in}
\setlength{\evensidemargin}{-.1in}
\setlength{\oddsidemargin}{-.1in}
\setlength{\parindent}{15pt}
\setlength{\parskip}{2pt}
\parskip 2pt

\newcommand{\ecal}{\mathcal{E}}

\newcommand{\tcal}{\mathcal{T}}

\renewcommand{\P}{\mathbb{P}}

\renewcommand{\root}{r}
\newcommand{\groot}{\rho}

\newcommand{\ball}[2]{\mathcal{C}_{#1}}

\newcommand{\locations}{\mathcal{X}}
\newcommand{\mrca}{\mathrm{MRCA}}

\renewcommand{\time}{\tau}
\newcommand{\lgt}{\lambda}
\newcommand{\sub}{\mu}

\newcommand{\weight}{\omega}

\newcommand{\maxlgt}{\overline{\lgt}}
\newcommand{\minlgt}{\underline{\lgt}}

\newcommand{\lgtweight}{\Lambda}

\newcommand{\lgttotalextinct}{\boldsymbol{\Lambda}_{\mathrm{tot}}}
\newcommand{\mintime}{\underline{\time}}
\newcommand{\maxtime}{\overline{\time}}

\newcommand{\ratiolgt}{\rho_\lgt}
\newcommand{\ratiotime}{\rho_\time}
\newcommand{\ratiosub}{\rho_\sub}
\newcommand{\minsub}{\underline{\sub}}
\newcommand{\maxsub}{\overline{\sub}}

\newcommand{\forest}{\mathcal{F}}

\newcommand{\remove}[1]{{}}

\addtocounter{page}{-1}

\begin{document}

\title{\vspace{0cm}
Species Trees are Recoverable from Unrooted Gene Tree Topologies Under a Constant Rate of Horizontal Gene Transfer\footnote{Keywords: Phylogenetic Reconstruction, Horizontal Gene Transfer, Gene Tree/Species Tree, Distance Methods.}
}

\author{
Constantinos Daskalakis\footnote{Department of Electrical Engineering and Computer Science at MIT. Supported by a Microsoft Research faculty fellowship and NSF Award CCF-0953960 (CAREER) and CCF-110149. This work was done while CD was visiting the Simons Institute for Theoretical Computer Science.}
\and
Sebastien Roch\footnote{Department of Mathematics at the University of Wisconsin--Madison.
Supported by NSF grant DMS-1149312 (CAREER) and DMS-1614242. 
This work
was done while SR was visiting the Simons Institute for Theoretical Computer Science.}
}
\maketitle

\begin{abstract}
Reconstructing the tree of life from molecular sequences is a fundamental problem in computational biology. Modern data sets often contain a large number of genes, which can complicate the reconstruction problem due to the fact that different genes may undergo different evolutionary histories. This is the case in particular in the presence of horizontal genetic transfer (HGT), where a gene is inherited from a distant species rather than an immediate ancestor. Such an event produces a gene tree which is distinct from, but related to, the species phylogeny. 

In previous work, a natural stochastic models of HGT was introduced and studied. It was shown, both in simulation and theoretical studies, that a species phylogeny can be reconstructed from gene trees despite surprisingly high rates of HGT under this model. Rigorous lower and upper bounds on this achievable rate were also obtained, but a large gap remained. Here we close this gap, up to a constant. Specifically we show that a species phylogeny can be reconstructed correctly from gene trees even when, on each gene, each edge of the species tree has a constant probability of being the location of an HGT event. Our new reconstruction algorithm, which relies only on unrooted gene tree topologies, builds the tree recursively from the leaves and runs in polynomial time. 

We also provide a matching bound in the negative direction (up to a constant) and extend our results to some cases where gene trees are not perfectly known.

\end{abstract}

\thispagestyle{empty}

\clearpage

\section{Introduction}\label{section:introduction}

A major challenge in the reconstruction of the
tree of life from modern molecular datasets
is that different genes often tell
conflicting stories 
about the evolutionary history of a group
of organisms~\cite{Maddison:97,DeBrPh:05,Nakhleh:13}.
Consider the example in Figure~\ref{fig:spr}.
\begin{figure}[b!]
	\centering
	\includegraphics[width=5in]{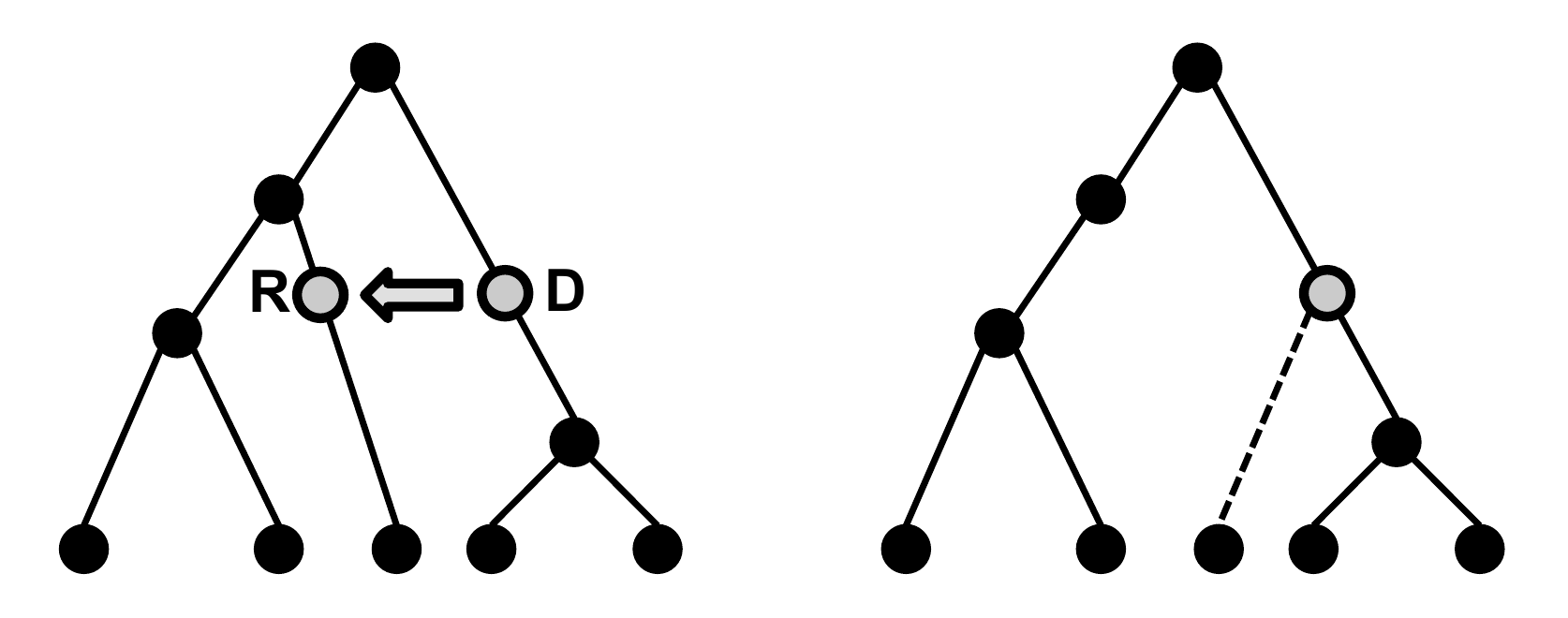}
	\caption{
		An HGT event. On the left, the species
		phylogeny is shown with the donor (D) and
		recipient (R) locations.
		On the right, the resulting (unweighted) gene tree
		is shown after the HGT event.
	}\label{fig:spr}
\end{figure}
On the left-hand side is depicted a \emph{species phylogeny},
where branchings (i.e., internal nodes) represent
past speciation events and leaves correspond to
contemporary species~\cite{SempleSteel:03}.
Imagine that, at some point in the past
history of these species, a gene was transferred 
by a virus from
a donor species (the gray node 
labeled $D$) to a recipient species (the gray node 
labeled $R$), a biological process
known as transduction~\cite{hartl1998genetics},
and eventually replaced that gene in the recipient
species. This event is referred to as 
a \emph{horizontal gene transfer} (HGT). From a phylogenetic point of view, the result of this transfer is that, in the tree representing
the history of this transferred gene, the middle
branch is now more closely related to the right subtree of the root (where the donor species lies) than to
the left subtree of the root---in direct conflict with
the species history. This new tree, depicted on the
right-hand side of Figure~\ref{fig:spr}, is called a \emph{gene tree}.

In datasets comprising multiple genes,
each gene has its own history---embod\-ied
by its own gene tree. An abundance of reconstruction
algorithms have been developed to infer
such gene trees using DNA sequences
extracted from a reference individual of each leaf
specie~\cite{Felsenstein:04}. In particular
much is known about the rigorous, theoretical properties
of these single-gene inference methods. See e.g.,~\cite{SteelSzekely:02,Mossel:03,Mossel:04a,MoRoSl:11,ErStSzWa:99a,DaMoRo:11a,DaMoRo:11b,Roch:10,ErStSzWa:99b,CrGoGo:02,MosselRoch:05,DaskalakisRoch:10,DaskalakisRoch:13,AnDaHaRo:10,AnDaHaRo:12} for a sample of results. 
In some parts of the tree of life, e.g., in bacteria,
it is common for a significant fraction of
gene trees to be in conflict 
with the species phylogeny. See~\cite{ZhGoCh+:06} for an example. Although individual
gene histories are interesting in their own
right, a fundamental goal is
to reconstruct the species history (which is 
not directly measurable), i.e., 
the sequence of speciation events that have
produced the current diversity of species.
(We ignore past extinctions, which cannot be inferred
purely from contemporary data.) Therefore, 
a key problem in modern phylogenetics is:
\emph{how to infer a species phylogeny from a
collection of (possibly discordant) gene trees?}

The answer to this question depends on the
mechanism(s) responsible for the discordances,
which include HGT as well as gene tree estimation errors,
incomplete lineage sorting, gene duplications
and losses, etc.~\cite{Maddison:97}. In
the current work, we focus solely on 
HGT. One possible reconstruction
approach is to identify genes or loci 
which are believed to have undergone
little or no HGT, such as 16S 
ribosomal RNA sequences~\cite{Fox25071980}. 
However such sequences are typically short,
leading to unreliable tree estimates~\cite{SteelSzekely:02,Mossel:03}. Moreover a single-gene approach ignores much of the available data.
Here we consider inference methods based on multiple genes that explicitly model the discordances
produced by HGT. A stochastic model
of HGT was introduced by Roch and Snir~\cite{RochSnir:12}, inspired by work
of Kim and Salisbury~\cite{KimSalisbury:01} and Galtier~\cite{Galtier:07}. In this model,
for each gene independently,
HGT events occur
at random along the phylogeny according to a Poisson point process (see Section~\ref{sec:definitions-and-results} for details). The goal is then
to recover the species phylogeny from 
a collection of gene trees, each of which can be thought
of as a ``randomly scrambled'' instance of the
species phylogeny. A related
model was studied in~\cite{Linz01062007,Steel201381,Sand2013295}.
In particular, Steel et al.~\cite{Steel201381} shed light on some of the challenges arising
in this context by showing that applying a majority rule to all triples of species may fail to recover the correct topology.

A natural question is:
\emph{when is there too much HGT to recover the
species phylogeny?} It was proved
in~\cite{RochSnir:12} that surprisingly
high rates of HGT can in fact be tolerated, in agreement with simulation results of Galtier~\cite{Galtier:07}.
Roughly, under assumptions that will
be detailed in Section~\ref{sec:definitions-and-results}, 
a species phylogeny with $n$ leaves can be recovered
from a logarithmic number of genes when the HGT rate is at most
$O(1/\log n)$ per unit time. 
On the other hand, it was also shown in~\cite{RochSnir:12} that 
there are species phylogenies that \emph{cannot} be distinguished with constant probability
from the same number of genes when the HGT rate is of the order
of $\Omega(\log\log n)$ per unit time.

Here we close the gap. 
That is, under the same assumptions, 
we show that in fact a \emph{constant} rate of HGT
can be tolerated, with a matching bound (up to constants)
in the negative direction.
The algorithmic result in~\cite{RochSnir:12}
is based on the observation that,
under the assumptions made there, 
for any gene the subtree spanned by any
four leaves is unlikely to be the location
of an HGT event when the rate is $O(1/\log n)$. 
By taking a majority vote across genes,
the corresponding subtree of the species phylogeny
can be obtained with high probability. Then,
using standard techniques~\cite{SempleSteel:03},
the full species phylogeny can be derived from
all four-leaf subtrees, also called quartet topologies
or simply quartets. This argument
fails when the rate of HGT is constant. Instead,
we use a recursive approach which progressively
builds the species phylogeny from the leaves up,
using the information obtained from partially
reconstructed subtrees to reach further into the
past. This recursive approach is reminiscent
of recent work in the single-gene context
where tight results were obtained using this type of approach~\cite{Mossel:04a,DaMoRo:06,Roch:10}.
The negative result, on the other hand,
follows from a coupling argument. 

The rest of the paper is organized as follows.
The stochastic model of gene trees under HGT is described
in Section~\ref{sec:definitions-and-results}, 
alongside a statement of the main results.
The proof of the algorithmic result 
is first presented in a special case in Section~\ref{sec:proofs}.
The full proof is then derived in Section~\ref{sec:proofs-contractions}.
The impossibility result is detailed in Section~\ref{sec:proofs-2}.

\section{Definitions and Results}\label{sec:definitions-and-results}

In this section 
	we introduce the stochastic model of 
	horizontal gene transfer (HGT), which
	is based on models of
	Kim and Salisbury~\cite{KimSalisbury:01,GeWaKi:05} and of Galtier~\cite{Galtier:07,GaltierDaubin:08}.
	In essence, we assume that HGT events occur 
	at random along the species phylogeny. 
	We follow roughly the presentation 
	in~\cite{RochS12}. See also~\cite{Suchard:05,Jin-Bioinf-2006,Linz01062007}
	for related models.
	After introducing the model in Section~\ref{sec:model}, we proceed with the formal statement of our main results. 

\paragraph{Notation} Recall that, for functions $f(n), g(n)$,
$f = O(g)$ means that there is constant $C > 0$ such that
$f(n) \leq C g(n)$ for all $n$ large enough. Similarly,
$f = \Omega(g)$ indicates $f(n) \geq C' g(n)$ for $C' > 0$. 
In addition $f = \Theta(g)$ is equivalent to
$f = O(g)$ and $f = \Omega(g)$.
By {\it polynomial in $n$}, we mean $O(n^{C''})$
for some constant $C'' > 0$.
We use the notation $\P[\ecal_0\,|\,\ecal_1]$
for the conditional probability of $\ecal_0$
given $\ecal_1$.

\subsection{Stochastic Model of HGT}
\label{sec:model}

A species phylogeny
is a graphical representation 
of the speciation history of a collection of organisms.
The leaves correspond to extant species. 
Each branching indicates a speciation event. 
To each edge is associated a 
positive value corresponding to the time
elapsed along that edge. 
\begin{definition}[Species phylogeny]\label{def:phylogeny}
A \emph{species phylogeny} 
$T_s = (V_s,E_s;\root,\time)$ 
is a directed tree rooted at $\root$ with vertex set $V_s$, edge set $E_s$ and
$n$ labelled leaves $L = [n] = \{1,\ldots,n\}$ such that 1) the degree of all internal vertices
$V_s-L$ is exactly $3$ except the root $\root$
which has degree $2$, and 2) the edges are assigned 
inter-speciation times 
$\time : E_s \to (0,+\infty)$.
We assume that $T_s$ is \emph{ultrametric},
that is, from every node, 
the path lengths with respect to $\time$ from that node
to all its descendant leaves are equal.
(This is equivalent to assuming that all leaves
	are contemporaneous.)
\end{definition}
\noindent Phylogenies are naturally equipped with a notion
of distance between leaves (or more generally vertices).
Such metrics are useful in reconstructing phylogenies.
\begin{definition}[Species metric]\label{def:species metric}
	A species phylogeny 
	$T_s = (V_s,E_s;\root,\time)$
	induces a metric $\time$ on the leaves defined as follows, for all $u,v \in L$:
	$$\time(u,v) = \sum_{e \in p(u,v)} \time(e),$$
	where $p(u,v)$ is the unique path between leaves $u,v$ in the phylogeny, viewed as a set of edges. We call $\time$ the {\em species metric}.
\end{definition}

To infer a species phylogeny, 
we first reconstruct gene trees,
that is, trees of ancestor-descendant relationships 
for orthologous genes (or, more generally, loci).
Phylogenomic studies have revealed extensive 
discordance between gene trees, in particular,
	as a result of HGT as we describe below (e.g.~\cite{BaSuLe+:05,DoolittleBapteste:07}).
\begin{definition}[Gene tree]\label{def:genetree}
A \emph{gene tree} $T_g = (V_g,E_g;\groot,\weight_g)$
for gene $g$ 
is a directed tree rooted at $\groot$ 
with vertex set $V_g$, edge set $E_g$ and the same leaf-set $L=\{1,\ldots,n\}$ as the species phylogeny
such that 1) the degree of every internal vertex
is {either $2$ or} 
$3$, and 2) the edges are assigned branch lengths $\weight_g : E_g \to (0,+\infty)$. Similarly to Defintion~\ref{def:species metric},
we let $\weight_g(u,v)$ be the sum of the branch lengths
on the path between $u$ and $v$.
\end{definition}

As we will discuss below, gene trees are derived from---
or ``evolve'' on---the species phylogeny. In our model,
their branch lengths represent  expected
numbers of substitutions. Their topology and branch lengths
may differ from those of the species phylogeny as a result of
HGT events.
Our stochastic model
of HGT requires a {\em rooted} species
phylogeny as time plays a key role
in constraining valid HGT events. Indeed such events 
necessarily involve contemporaneous locations in the species phylogeny.
See, e.g.,~\cite{JIN-TCBB-2009}. 
In particular our
results rely on the ultrametricity property of the
 species phylogeny.

We now formalize a stochastic model of HGT. 
First some notation.
Let 
$T_s = (V_s,E_s;\root,\time)$
be a fixed species phylogeny.
By a {\em location} in $T_s$, we mean
any position along $T_s$ seen as a continuous
object, 
that is,
a point $x$ along an edge $e \in E_s$.
We write $x \in e$ in that case.
We denote the set of locations in $T_s$
by $\locations_s$. 
We say that $x \in \locations_s$ is an {\em ancestor}
	of $y \in \locations_s$ if $x$ is on the path between
	$y$ and $r$ in $T_s$ (in which case $y$ is also
	a \emph{descendant} of $x$).
For any two locations $x, y$ in $\locations_s$,
we let $\mrca(x,y)$ be their most 
recent common ancestor (MRCA) in $T_s$
and we let $\time(x,y)$ be the length 
of the path connecting $x$ and $y$
in $T_s$ under the metric naturally defined
by the weights $\{\time(e), e \in E_s\}$,
interpolated linearly to locations
along an edge.
In words $\time(x,y)$, which we refer to
as the $\time$-distance between $x$ and $y$, 
is the sum of times
to $x$ and $y$ from $\mrca(x,y)$. We say that
two locations $x, y$ are 
{\em contemporaneous} if their respective
$\time$-distance to the root $\root$
is identical, that is, 
$$
\time(x,\root) = \time(y,\root).
$$ 
We let 
$$
\ball{x}{R}
= \{y \in \locations_s : \time(\root,x) = \time(\root,y)\}$$ 
be the
set of locations contemporaneous to $x$.

We associate to each edge $e\in E_s$ in $T_s$ 
a {\em rate of horizontal gene transfer} $0 < \lgt(e) < +\infty$.
We let 
$\lgtweight(e) =  \lgt(e) \time(e)$, $e \in E_s$.
We note that, since $\lgt(e)$ is the HGT rate on $e$,
$\lgtweight(e)$ gives the expected number of 
HGT events along $e$. Further, we let
$$
\lgttotalextinct = \sum_{e\in E_s} \lgtweight(e),
$$
be the {\em total HGT weight} of the phylogeny. 

Our model of HGT is as follows.
From a topological point of view, 
an HGT event is
equivalent to a subtree-prune-and-regraft (SPR)
operation~\cite{SempleSteel:03}.
The recipient location, that is, the location receiving the
gene transfer, is the point of pruning.
The donor location is the point of
regrafting. 
In other words, 
on the gene tree, a new internal node is created 
at the donor location with two children nodes, 
one being the 
original endpoint of the corresponding edge 
and the other being the node immediately 
under the recipient location in the species phylogeny. 
The original edge going to the latter node is removed.
Refer to Figure~\ref{fig:spr} for an illustration.

Before describing the model formally,
	we need some further notation.
	As will become clear from the description
	of the HGT process below (see the example in Figure~\ref{fig:spr}), each edge $e$ of
	the gene tree $T_g$ corresponds to a full or a partial 
	edge of the species phylogeny $T_s$. 
	In particular, there exists a mapping 
	$(\eta,\zeta_b,\zeta_f): E_g \rightarrow E_s \times \mathbb{R}_+ \times \mathbb{R}_+$,
	mapping an edge $e \in E_g$ to an edge $\eta(e) \in E_s$ 
	and a pair of times $0 \le \zeta_b(e) \le \zeta_f(e) \le \time(\eta(e))$.
	The quantities
	$\zeta_b(e)$ and $\zeta_f(e)$  represent times of HGT events on edge $\eta(e)$, as we will
	define below. 
	Finally, for each gene $g$ and each edge
		$e \in E_s$ in the species tree, we associate a 
		rate of substitution $0 < \sub_g(e) < +\infty$.
\begin{definition}[Stochastic model of HGT]
\label{def:randomlgt}
Let 
$T_s = (V_s,E_s;\root,\time)$
be a fixed spec\-ies phylogeny.
A gene tree $T_g$ is generated according
to the following continuous-time stochastic process,
which gradually modifies the species phylogeny 
starting at the root. There
are two components to the process:
\begin{enumerate}
\item {\bf HGT locations.} 
The recipient and donor locations
of HGT events are selected as follows:
\begin{itemize}
\item {\em Recipient locations.}
Starting from the root,
along each branch $e$ of $T_s$, 
locations are selected as recipient of a gene tranfer
according to a continuous-time Poisson
process with rate $\lgt(e)$. 
Equivalently, the total number of 
HGT events is Poisson with mean $\lgttotalextinct$
and each such event is located independently
according to the following density.
For a location $x$ on branch $e$, 
the density at $x$ is
$\lgtweight(e)/\lgttotalextinct$.

\item {\em Donor locations.} If $x$ is selected
as a recipient location, the corresponding
donor location $y$ is chosen uniformly at random
in $\ball{x}{R}$. The HGT transfer is then
obtained by performing an SPR move from $x$ to $y$, 
that is, the subtree below $x$ in $T_s$ is moved
to $y$ in $T_g$. 
\end{itemize}
The probability that a recipient or donor location coincides with a node of $T_s$ is $0$. 

\item {\bf Executing the HGT Process:} We perform gene transfers  
chronologically from the root:
\begin{itemize}
\item We initialize the gene tree as follows: $V_g=V_s$, $E_g = E_s$. 
\item We also initialize the mappings $(\eta,\zeta_b,\zeta_f)$ as follows, for all $e \in E_g$: $\eta(e)=e$; $\zeta_b(e)=0$; $\zeta_f(e)=\time(e)$.
\item We process the HGT events chronologically as follows:
\begin{enumerate}
\item Suppose the next event to process has $x \in e \in E_s$ as recipient location and $y \in e' \in E_s$ as donor location.
\item We find the unique edges $e_x, e_y \in E_g$ such that:
\begin{itemize}
\item $\eta(e_x)=e$ and $\eta(e_y)=e'$; and
\item $\zeta_b(e_x) \le \time_x \le \zeta_f(e_x)$ and $\zeta_b(e_y) \le \time_y \le \zeta_f(e_y)$;
\end{itemize}
where $\time_x$ is the time between $x$ and its most recent ancestor in $T_s$, and similarly for $\time_y$.
\item We introduce a new node $v$, splitting $e_y$ into two consecutive edges, $e_{y_1}$ and $e_{y_2}$, with the following features:
\begin{itemize}
\item $\eta(e_{y_1})=\eta(e_{y_2})=e'$;
\item $\zeta_b(e_{y_1})= \zeta_b(e_{y})$; $\zeta_f(e_{y_1})= \time_y$;
\item $\zeta_b(e_{y_2})= \time_y$; $\zeta_f(e_{y_2})= \zeta_f(e_{y})$.
\end{itemize}
\item If $e_x=(u,w)$, we update it to $e_x=(v,w)$, and change $\zeta_b(e_x)=\time_x$.
\end{enumerate}
\end{itemize}
 
\end{enumerate}
After all HGT events have been processed, the weights on the resulting gene tree $T_g$ are defined as follows. 
For all $e \in E_g$, $\weight_g(e) = (\zeta_f(e)-\zeta_b(e)) \cdot \sub_g(\eta(e))$.
\end{definition}

Observe that HGT events may disconnect
		subtrees of the species phylogeny from their original roots, connecting them
		to other branches of the gene tree, thereby
		creating nodes of degree $2$ in the gene tree. We allow internal vertices
		of degree $2$ in a gene tree to potentially
		delineate between two consecutive species phylogeny edges.
	Each gene tree branch length $\weight_g(e)$ represents
	the expected number of substitutions
	on the (possibly partial) edge of the species phylogeny corresponding
	to edge $e$ of the gene tree, which
	is determined by the substitution rate $\sub_g(\eta(e))$, as well as the times $\zeta_b(e)$ and
	$\zeta_f(e)$.

\subsection{Species phylogeny reconstruction under constant HGT rate} 
\label{sec:results}

Let 
$T_s = (V_s,E_s;\root,\time)$ 
be an unknown species phylogeny. 
We assume that $N$ independent gene trees $T_{g_1},\ldots,T_{g_N}$,
corresponding to homologous genes $g_1,\ldots,g_N$, were generated according to
the process of Definition~\ref{def:randomlgt}. \emph{Our overall goal is to reconstruct the species phylogeny, given the gene trees.} 

\paragraph{Problem statement}
However, given that the gene trees are ultimately reconstructed from genetic sequences, we assume that we have \emph{imperfect}
knowledge of these trees. 
To formalize this further, we make the following definitions.
\begin{definition}[Real subtree] \label{def:compound definition}
Given a rooted tree $T$, we call a subtree $T'$ of $T$ {\em real} if all leaves of $T'$ are leaves of $T$. 
Given a node $u$ of $T$, we denote by $u \downarrow T$ the subtree of $T$ rooted at $u$. 
\end{definition}
\begin{definition}[Leafsomorphic trees]
Given two leaf-labeled rooted, directed trees $T=(V,E)$ and $T'=(V',E')$ we call them {\em leafsomorphic}
if there exists a leaf-label respecting isomorphism between the trees $\tilde{T}$ and $\tilde{T'}$ obtained from $T$ and $T'$ respectively, after replacing all maximal directed paths $\langle u, u_1,\ldots,u_k, v \rangle $ whose internal vertices have in- and out-degree $1$ by a single directed edge $\langle u,v\rangle$.	
\end{definition}
With the above definitions we can formalize the information that our algorithm is given. 
\begin{itemize}
\item {\bf Contracted unrooted gene tree topologies.}
We assume that we are given
unrooted gene tree topologies where only those edges on a path between two leaves are kept and degree $2$ vertices are suppressed. In addition,
we note that the HGT process can 
produce gene tree branch lengths
that are arbitrarily short, and therefore, that may be
hard to reconstruct from DNA sequences.
Hence
we also assume that a subset of edges (possibly all) whose length
is below a threshold $\epsilon$ are contracted.
Namely, for each gene $g$, we are given an $\epsilon$-contraction of its gene tree, defined as follows.
\begin{definition}[$\epsilon$-Contraction] \label{def:contraction of gene tree}
	For $\epsilon\geq 0$, an $\epsilon$-{\em contraction of a gene tree} $T_g = (V_g,E_g;\root,\weight_g)$ is a unrooted, unweighted tree topology $T_{g}'=(V_g', E_g')$ on the same set of leaves $L$ as $T_g$ obtained from the following construction. We start by unrooting $T_g$, removing all edges of $T_g$ that are not on a path between two leaves, and replacing all maximal paths $(u, u_1,\ldots,u_k, v)$ whose internal vertices have 
	degree $2$ by a single edge $(u,v)$ whose weight is the sum of the weights
	of the edges on the path. We then contract
	a subset of edges (possibly all) 
	whose weigth is $\leq \epsilon$,
	i.e., for each chosen edge, we remove the edge and fuse its endpoints into a single new vertex. We then discard
	the weights. The result is $T_g'$.
	\end{definition}
\noindent See~\cite{DaMoRo:11b} for a reconstruction
algorithm that produces such a contraction from
sequences at the leaves. We use the notation $d_g(u,v)$ and $d_g'(u,v)$ for the graph distances between $u$ and $v$
on $T_g$ and $T_g'$ respectively.
\end{itemize}
\noindent The {\bf Species Phylogeny Reconstruction Problem in the Presence of HGT} is the following:
\begin{quote}
Given $\epsilon$-contractions $T_{g_1}',\ldots,T_{g_N}'$ of $N$ independent gene trees generated under the process of Definition~\ref{def:randomlgt}, reconstruct the topology $\tcal_s$ of the phylogeny, namely reconstruct the rooted tree $(V_s,E_s)$ up to a leaf-label respecting isomorphism.
\end{quote}
\noindent Our main focus in this work is on the rate of HGT that can be sustained without obscuring the phylogenetic signal.

\paragraph{Main result}
To derive asymptotic results, we make some assumptions on the underlying model.
The following
assumptions were introduced in~\cite{DaskalakisRoch:10,DaskalakisRoch:13}
and are related to commonly made assumptions in the
mathematical phylogenetics literature.
\begin{definition}[Bounded-rates model]
\label{def:brm}
Let $0 \leq \ratiolgt \leq 1$, $0 < \ratiotime, \ratiosub \leq 1$,
 and $0 <  \maxtime,\maxlgt,\maxsub < +\infty$ be constants.
Under the bounded-rates model, we consider
the set of phylogenies 
$T_s = (V_s,E_s;\root,\time)$ 
on $n$ extant leaves with rates of transfer $\lgt(e)$ and
rates of substitution $\sub_g(e)$
such that the following conditions are satisfied: $\forall e\in E_s$ and all genes $g$, 
$\minlgt \equiv \ratiolgt \maxlgt \leq \lgt(e) \leq \maxlgt$, $\mintime \equiv \ratiotime \maxtime \leq \time(e) \leq \maxtime$, and
$\minsub \equiv \ratiosub \maxsub \leq \sub_g(e) \leq \maxsub$.
\end{definition}
Finally, our main result is the following.
\begin{theorem}[Algorithmic result: $\epsilon$-contractions]
	\label{thm:main1b}
	Fix constants $0 \leq \ratiolgt \leq 1$, $0 < \ratiotime \leq 1$, $0 < \ratiosub \leq 1$, $0 <  \maxtime,\maxsub < +\infty$ and $0 \leq \epsilon < \mintime\minsub$.
	Under the bounded-rates model,
	it is possible to reconstruct 
	the topology of the species phylogeny 
	with probability at least $1-{1 \over {\rm poly}(n)}$
	from 
	$\epsilon$-contractions of $N= \Omega(\log n)$ independent gene trees generated under the process
	of Definition~\ref{def:randomlgt}, as long as
	$\maxlgt$ is a sufficiently small constant not depending on $n$.
\end{theorem}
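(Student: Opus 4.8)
The plan is to reconstruct $T_s$ \emph{level by level from the leaves up}, maintaining as an invariant a set $W$ of correctly reconstructed rooted subtrees whose roots form an antichain covering all of $L$, and at each stage identifying the sibling pairs (``cherries'') among the roots in $W$ so as to add one more level of internal nodes. Before anything else I would record the structural consequence of the hypotheses that makes the whole approach viable: since $T_s$ is ultrametric (Definition~\ref{def:phylogeny}) with all edge lengths in $[\mintime,\maxtime]$, every root-to-leaf path has the same $\time$-length $H$ and between $H/\maxtime$ and $H/\mintime$ edges, which forces $H=\Theta(\maxtime\log n)$ and a roughly balanced shape. More importantly, by Kraft's identity for full binary trees, every node $u$ at $\time$-height $h$ above the leaves has at least $2^{h/\maxtime}$ descendant leaves. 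This exponential abundance of descendants is what will compensate for HGT scrambling.

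Second, I would establish the basic per-gene estimates. The key computation is that, for a fixed species edge $e$, the expected number of HGT events whose \emph{donor} location lands on $e$ equals $\Lambda(e)=\lgt(e)\time(e)\le\maxlgt\maxtime$: in a thin height-slab the recipient intensity is (number of contemporaneous lineages)$\times\lgt$, while each recipient selects its donor uniformly among those same lineages, so the factors cancel and the donor intensity on any single lineage is just $\lgt$ per unit $\time$-length. Hence both recipient and donor expected counts on any \emph{single} edge are $O(\maxlgt)$. A true bottom-level cherry $\{a,b\}$ with parent $p$ is preserved on gene $g$ unless an HGT recipient or donor lands on the two short edges $(p,a),(p,b)$, an event of probability $\ge e^{-c\maxlgt}>1/2$ for $\maxlgt$ small, whereas a non-cherry pair can be made adjacent only by a dedicated HGT event and so appears as a cherry with probability $O(\maxlgt)$. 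A threshold on the fraction of genes in which a pair forms a cherry therefore separates the two cases; since the $N$ genes are independent, a Chernoff bound with $N=\Omega(\log n)$ and a union bound over the $O(n^2)$ pairs detect the bottom level correctly with probability $1-1/\poly(n)$.

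The inductive step is the crux and the main obstacle. To lift the cherry test to an arbitrary antichain $W$ I would introduce, for a node $u$ and gene $g$, the set $S_u^g\subseteq A_u$ of leaves that \emph{faithfully represent} $u$, meaning that tracing the leaf's lineage backward in $g$ it reaches the location of $u$ at height $h_u$ without being displaced, i.e. without encountering an HGT recipient on its path up to $u$. Since a recipient on that path deflects the lineage to a uniform (hence almost surely foreign) contemporaneous donor, a single leaf lies in $S_u^g$ with probability $\approx e^{-\lgt h_u}$, which decays with the depth of $u$; the point is that the number of candidate leaves grows like $2^{h_u/\maxtime}$, so the surviving representatives are exactly the survivors of a \emph{supercritical} branching process in which the tree splits each lineage (into two within height $\maxtime$) while HGT kills it at rate $\lgt$. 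When $\maxlgt\maxtime$ is below the branching threshold ($\approx\ln 2$, up to the balance factor $\ratiotime$), the offspring mean $2e^{-\maxlgt\maxtime}$ exceeds $1$ and the survival probability is bounded below by a positive constant \emph{uniformly in the depth of $u$} --- this is precisely where ``$\maxlgt$ a sufficiently small constant'' is used. Granting $S_u^g,S_v^g\neq\emptyset$ together with the short edges above $u,v$ being HGT-free (again an $e^{-c\maxlgt}$ event), any pair of representatives traces up through the common parent and witnesses $u,v$ as a cherry at that level, while a false pair again requires a dedicated HGT event. The delicate part I expect to fight with is turning ``$S_u^g\neq\emptyset$'' into an \emph{observable} test: we do not see lineages, only the contracted topology on all leaves, so the voting statistic must use the already-reconstructed topology of each subtree in $W$ to localize valid representatives and must be shown to separate true from false cherries with a margin surviving the union bound.

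Finally I would assemble the pieces: there are $\Theta(\log n)$ levels and $O(n^2)$ candidate pairs per level, so with $N=\Omega(\log n)$ genes a Chernoff estimate of each per-gene success probability together with a union bound over all tests gives overall success $1-1/\poly(n)$, and the height-ordered, leaves-up construction automatically yields the \emph{rooted} topology, so unrooted gene tree topologies suffice as input. The special case $\epsilon=0$ (perfectly known, uncontracted topologies) carries the argument with the least bookkeeping and would be presented first; for general $\epsilon$-contractions (Definition~\ref{def:contraction of gene tree}) I would use the hypothesis $\epsilon<\mintime\minsub$: any \emph{full} species edge contributes gene weight at least $\mintime\minsub>\epsilon$ and so is never contracted, whence the structural edges carrying the cherry and representative signals survive, while the only edges that can be contracted are the short partial edges created by HGT fragmentation, which do not affect the clade-level signals the tests rely on.
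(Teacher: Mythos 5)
Your high-level strategy is the same as the paper's (a leaves-up recursion, a percolation/branching-process argument showing that each internal node $u$ retains undisturbed descendants in each gene tree, a per-gene vote aggregated by Chernoff bounds and a union bound, and the observation that $\epsilon<\mintime\minsub$ keeps full species edges uncontracted). But the step you yourself flag as ``the delicate part'' is a genuine gap, and it is exactly the step the paper's machinery exists to fill. Knowing that the set $S_u^g$ of faithful representative leaves is nonempty is an \emph{existential} statement about unobserved lineages: from the contracted, unrooted gene tree topology alone you cannot tell which leaves of $A_u$ still trace to $u$ and which were displaced by HGT, so your voting statistic at higher levels is never actually defined. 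The paper resolves this with the notion of a \emph{diluted subtree}: (i) with probability close to $1$ the gene tree contains a real subtree leafsomorphic to a diluted subtree of $u\downarrow T_s$ that received no HGT (Lemma~\ref{lem:exists-diluted-subtree}, the percolation argument you anticipate); (ii) crucially, \emph{any two} real subtrees of the gene tree leafsomorphic to diluted subtrees of $u\downarrow T_s$ have the same root, because by pigeonhole two diluted subtrees of the same tree share two disjoint root-to-leaf paths (Claim~\ref{claim:little claim}, Lemma~\ref{lem:identification}) --- this is what makes the representative of $u$ a well-defined, observable node of the gene tree; and (iii) such a subtree can be found in polynomial time by dynamic programming (Propositions~\ref{lem:identifying diluted tree} and~\ref{lem:identifying diluted tree-contracted}). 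A single surviving leaf, which is all your branching-process argument guarantees, has no analogue of property (ii) and hence cannot be located.

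Two smaller points. First, your claim that a non-cherry pair ``can be made adjacent only by a dedicated HGT event'' hides the complication the paper treats explicitly in Lemma~\ref{lem:one step-contracted}: since the input is a topology with degree-$2$ vertices suppressed, a subtree hanging off the path between two leaves can be transferred away wholesale, deleting an internal vertex of the path and shortening the graph distance without any event on the path itself; the paper handles this with an additional percolation argument producing an open path from each such hanging subtree to one of its leaves, and your cherry-vote variant needs the same control. Second, the paper's recursion maintains a \emph{pruning} (Definition~\ref{def:pruning-contracted}) rather than a level-by-level antichain, precisely because with only unrooted inputs and the bounded-rates (not exactly balanced) tree one cannot guarantee the reconstruction proceeds in synchronized height levels; this is a bookkeeping difference rather than a substantive one, but your ``height-ordered'' framing would need adjustment.
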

\noindent Our reconstruction algorithm is detailed in the proof of Theorem~\ref{thm:main1b}. The condition on $\epsilon$
in Theorem~\ref{thm:main1b} corresponds to
the requirement that branches with no transfer
are present in the gene tree, that is, are 
not contracted.


		
\paragraph{Proof sketch}
We first prove the result in an easier case,
the ultrametric case with partial branch length information.\footnote{Theorem~\ref{thm:main1} was announced without proof in extended abstract form in~\cite{DaskalakisRoch:16}.}
That is, we assume that the rate of substitution satisfies $\sub_g(e) = \sub$ for all $e$ for some $\sub > 0$ and that we are given an
$\epsilon$-distortion of the resulting gene tree metric.
\begin{definition}[$\epsilon$-Distortion] \label{def:distortion of gene tree}
	For $\epsilon \geq 0$, an $\epsilon$-{\em distortion of a gene tree} $T_g = (V_g,E_g;\root,\weight_g)$ is a rooted, directed tree $T_{g}'=(V_g', E_g'; \root',\weight_g')$ on the same set of leaves $L$ whose internal vertices have out-degree $2$
	which is obtained as follows. We remove all edges of $T_g$ that are not on a path between two leaves and replace all maximal paths $(u, u_1,\ldots,u_k, v)$ whose internal (non-root) vertices have 
	degree $2$ by a single edge $(u,v)$ whose weight is the sum of the weights
	of the edges on the path.
	Moreover, the edge weights $\weight_g'$ of $T_g'$ define a metric on the leaves that is $\epsilon$-close to the metric defined by $\weight_g$, namely for all pairs of leaves $v,w \in L$:
	$|\weight_g(v,w)-\weight_g'(v,w)| \le \epsilon$.
\end{definition}
\begin{theorem}[Algorithmic result: ultrametric $\epsilon$-distortions]
	\label{thm:main1}
	Fix constants $0 \leq \ratiolgt \leq 1$, $0 < \ratiotime \leq 1$, $0 <  \maxtime,\sub < +\infty$
	and $0 \leq \epsilon < \frac{\mintime\sub}{2}$. 
	Under the bounded-rates model where we further assume that $\sub_g(e) = \sub$ for all $e$,
	it is possible to reconstruct 
	the topology of the species phylogeny 
	with probability at least $1-{1 \over {\rm poly}(n)}$
	from 
	$\epsilon$-distortions of $N= \Omega(\log n)$ independent gene trees generated under the process
	of Definition~\ref{def:randomlgt}, as long as
	$\maxlgt$ is a sufficiently small constant not depending on $n$.
\end{theorem}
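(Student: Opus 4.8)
The plan is to reconstruct the species tree agglomeratively from the leaves toward the root, maintaining at each stage a forest $\forest$ of already-reconstructed rooted subtrees of $T_s$. Ultrametricity is used throughout: if $A \in \forest$ is correctly reconstructed with root $u_A$, then every leaf $a \in A$ sits at the same $\time$-height $h_A$ below $u_A$, so $u_A$ ``knows its height,'' and for any two current subtrees $A, B$ the height of $\mrca(A,B)$ equals $\time(a,b)/2$ for every $a \in A$, $b \in B$. The reconstruction step is thus single-linkage clustering: estimate, for each pair $(A,B)$, the height of $\mrca(A,B)$, and merge the pair(s) realizing the current minimum (the base case being the identification of cherries among the singleton leaves). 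The whole argument then reduces to showing that each merge decision is correct with probability $1 - 1/\poly(n)$; since there are only $\poly(n)$ decisions, a union bound finishes. Each decision is made by a robust aggregate over the $N = \Omega(\log n)$ genes, so it suffices that a single gene supply the correct signal with probability bounded away from $1/2$, after which a Chernoff bound over the independent genes yields the $1/\poly(n)$ failure probability.

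The heart of the matter is the distance estimator and its robustness to HGT. First I would establish a structural fact: in a gene $g$, the distance $\weight_g(a,b)$ differs from $\sub\,\time(a,b)$ only if some HGT recipient location falls on the species-tree path between $a$ and $b$; an event whose recipient lies off this path (for instance one that merely regrafts a foreign subtree onto it) leaves $\weight_g(a,b)$ unchanged after contraction. Second, and crucially, when the $a$--$b$ path is hit, the affected lineage is regrafted at a \emph{uniformly chosen contemporaneous} donor location, so the corrupted distances are \emph{spread out}, whereas every \emph{un}corrupted pair returns exactly the same value $2\,(\text{height of }\mrca(A,B))$. Hence the true $\mrca$ height appears as a sharp spike in the multiset $\{\weight_g(a,b)/\sub : a \in A,\ b \in B,\ g\}$ while corruptions do not concentrate, and the correct merge is detected by locating this spike at the smallest value recurring across genes. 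Finally, genuine siblings are exactly those realizing a lowest common ancestor, and any third subtree $C$ has $\mrca(A,C)$ at height at least $\mintime$ higher (the edge above the parent of $A,B$), giving a $2\mintime\sub$ margin in weight between the correct merge and every competitor; the hypothesis $\epsilon < \mintime\sub/2$ ensures the $\epsilon$-distortion perturbs each distance by strictly less than a quarter of this margin, so it cannot reverse a decision.

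With the estimator in hand I would bound the per-gene reliability of each local decision. For the correct pair $(A,B)$ the signal is lost only if an HGT disrupts the \emph{local} structure around $u_A$ and $u_B$ --- a recipient on, or a regraft separating, the short edges joining them to their parent --- an event of probability $O(\maxlgt)$ since those edges have $\time$-length $O(\maxtime)$ and the per-edge rate is at most $\maxlgt$. For a non-sibling pair a spurious low value requires an HGT configuration that actually pulls a leaf of one subtree below the true separation of the other, again of probability $O(\maxlgt)$ per gene since it demands a recipient--donor pair whose expected number is $\propto \maxlgt$. Taking $\maxlgt$ a small enough constant makes both probabilities $< 1/2 - \delta$, exactly what the Chernoff/union-bound reduction requires.

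The step I expect to be the genuine obstacle is the \emph{deep} behaviour of the estimator. For a deep subtree the path from a leaf to an external node is long, so almost every individual leaf-pair distance is corrupted by some HGT on that path, and the corruption is strongly correlated within a gene, since a single high event can displace an entire sub-clade at once; naive per-leaf-pair or per-gene majorities are therefore hopeless. The resolution I would push through is precisely the spike-versus-scatter dichotomy combined with the recursive use of $\forest$: because each merge decision is confined to the local structure near the current roots --- the reconstructed subtrees serve as reliable ``super-leaves'' that let us reach one level further back without re-deriving deep distances --- the per-gene reliability stays bounded away from $1/2$ independently of depth, while the correlated deep corruptions contribute only scatter that the spike-detection discards. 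Making this quantitative, i.e. proving that corrupted distances place negligible mass on the true value uniformly over all $\poly(n)$ decisions, is where the main technical work lies.
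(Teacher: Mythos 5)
Your high-level architecture (agglomerative reconstruction from the leaves, per-gene reliability bounded away from $1/2$, Chernoff plus a union bound over the $\poly(n)$ merge decisions, and the $2\mintime\sub$ margin against the hypothesis $\epsilon<\mintime\sub/2$) matches the paper's, and you correctly isolate the deep-cluster behaviour as the crux. But your proposed resolution of that crux --- the ``spike-versus-scatter'' estimator over the multiset $\{\weight_g(a,b): a\in A,\ b\in B,\ g\}$, with the merge decided by the smallest value recurring across genes --- does not work, and it is not what the paper does. The claim that corruptions ``do not concentrate'' fails quantitatively: the number of HGT events whose recipient lies in the subtree of a cluster $A$ scales like $\maxlgt\,|A|$, and a fraction roughly $|B|/n$ of these choose a donor location ancestral to some leaf of $B$ (or of a competitor cluster $C$); each such event produces pairs $(a,b)$ whose gene-tree distance equals $2\sub t_x$ for the continuously distributed recipient height $t_x$, i.e., a value far \emph{below} the true separation. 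For clusters of size $\Theta(n)$ the expected number of such spuriously short pairs per gene is $\Theta(\maxlgt\,|A|\,|B|/n)$, which diverges; the point process of spurious values then has density exceeding $1/\epsilon$ throughout the short range, every $\epsilon$-window is hit in essentially every gene, and ``the smallest recurring value'' collapses well below the truth, reversing merge decisions. More fundamentally, your sketch never supplies the mechanism by which a reconstructed cluster acts as a ``super-leaf'': to obtain even one reliable representative pair per gene you must locate, inside each gene tree, the node corresponding to the cluster root $u_A$, and you give no way to do this.

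That localisation is the technical heart of the paper's proof and is absent from yours. The paper introduces \emph{diluted subtrees} (Definition~\ref{def:diluted tree}) and proves two things: (i) by a percolation argument (Lemma~\ref{lem:exists-diluted-subtree}, adapting Mossel), each gene tree contains with probability $1-\delta$ a real subtree leafsomorphic to a diluted subtree of $u_A\downarrow T_s$ that received no transfers, and a pigeonhole/degree argument forces any such subtree to be rooted at the unique image of $u_A$ in the gene tree; (ii) such a subtree is computable in polynomial time by dynamic programming (Proposition~\ref{lem:identifying diluted tree}). The estimator is then the median, over genes, of the distance between \emph{one certified representative leaf} of each cluster (Proposition~\ref{lem:generalized one step}), which is exactly correct with per-gene probability $0.99$, so no anti-concentration statement about corrupted distances is ever needed. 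To repair your argument you would have to either import this existence-plus-identification machinery or prove a uniform anti-concentration bound for corrupted leaf-pair distances that, as argued above, is false for large clusters.
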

\noindent 
Our reconstruction algorithm, which is detailed in the proof of Theorem~\ref{thm:main1}, is recursive: it reconstructs the species phylogeny a few ``levels'' from the leaves at a time. 
To give some insights into how it works, 
we first observe that it is infeasible to use the approach of~\cite{RochS12} under the conditions
of Theorem~\ref{thm:main1}.	Indeed, in~\cite{RochS12}, 
the induced species phylogeny topology on every subset of four leaves $\{a,b,c,d\} \subset L$, also known as a {\em quartet}, is determined directly by using the majority induced topology on these four leaves across gene trees. When HGT rates are low enough, it can be shown that most such induced gene tree topologies 
coincide with the species phylogeny~\cite{RochS12}.
The full species phylogeny can then be reconstructed
from the collection of all quartets using
standard techniques (see e.g.~\cite{SempleSteel:03}).
But Theorem~\ref{thm:main1} allows an expected $\Omega(\log n)$ HGT events on every path from the root to a leaf, making the argument
in~\cite{RochS12} invalid. 
Instead, we work our way up the tree, obtaining stronger evidence for the state of quartets as we get firmer knowledge of the lower levels of the tree.
A related approach has proved very powerful
in the context of phylogeny reconstruction from
a single gene (see e.g.~\cite{Mossel:04a,DaMoRo:11a}).

The proof of Theorem~\ref{thm:main1} contains several steps:
\begin{enumerate}
	\item {\bf Reconstructing the recent past:}
	We first show how to use pairwise
	distance information to reconstruct the species
	phylogeny in the ``recent past.'' The basic
	idea is to show that, for each pair of 
	leaves at ``short distance'' in the species
	phylogeny, the median distance across all
	genes is a good estimate of the actual distance
	in the species phylogeny (Lemma~\ref{lem:one step}). We then use
	standard distance-based techniques to reconstruct
	the shallow part of the species phylogeny (Lemma~\ref{lem:trees-from-distorted-metrics}).
	
	\item {\bf Going deeper into the tree:} We then
	bootstrap the previous argument to reach
	deeper parts of the species phylogeny. The main
	problem is to identify corresponding vertices
	in the gene trees and in the reconstructed parts
	of the species phylogeny. Because of the extensive
	HGT, such a task is far from trivial. We show that,
	for each vertex at the frontier of the reconstructed
	phylogeny and for each gene, 
	one can find with high probability a certain type of subtree rooted at the corresponding vertices,
	called a diluted subtree, which has not undergone
	HGT and, therefore, is shared by the gene tree
	and the species phylogeny (Lemma~\ref{lem:exists-diluted-subtree}). We then
	show how to use such diluted subtrees to estimate
	the distance between close-by pairs of vertices 
	deep inside the reconstructed phylogeny (Proposition~~\ref{lem:generalized one step}).
	
	\item {\bf Computing diluted trees and recursing:} We show how
	to compute diluted subtrees in Proposition~\ref{lem:identifying diluted tree}.
	The algorithm is based on a dynamic programming
	approach. 
	The final details of the proof
	are described in Section~\ref{sec:proof-finish}
	where the main induction step is implemented.
	
\end{enumerate}

To prove Theorem~\ref{thm:main1b},
we make use of graph distances rather than distortions
and 
we employ 
a related ``unrooted'' approach, cherry picking, which is detailed
in Section~\ref{sec:proofs-contractions}.

\subsection{An impossibility result}

We also provide evidence that the reconstruction problem becomes significantly harder when the HGT rate is larger than a high enough constant. Specifically, we show that conisderably more data is needed in that regime.
\begin{theorem}[Impossibility result]
\label{thm:main2}
Fix $\rho_\lgt = 0$.
Under the bounded-rates model, for all $\ratiotime$,
$\ratiosub$, $\maxtime$ and $\maxsub$, 
there is a constant $\bar{\lambda}$ large enough such that
for any $n$ there exists two species phylogenies
which produce the same $N = \Omega(n^{1/6})$ gene trees with 
probability at least $1/2$.
\end{theorem}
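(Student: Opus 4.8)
The plan is to exhibit two non-isomorphic species phylogenies $T_s^{(1)}$ and $T_s^{(2)}$ lying in the bounded-rates model of Definition~\ref{def:brm} with $\ratiolgt=0$, and to construct a coupling of the gene-tree generating processes of Definition~\ref{def:randomlgt} on the two trees so that, for a single gene, the two resulting (observable) gene trees coincide except on a rare event of probability $p=p(n)$. Tensorizing over the $N$ independent genes then shows that the full collections of gene trees coincide with probability at least $(1-p)^N \ge 1-Np$; choosing $N=\Omega(n^{1/6})$ together with $p=o(n^{-1/6})$ makes this at least $1/2$. Since on the coinciding event any algorithm must return the same tree for both phylogenies, and hence be wrong on at least one of them, this rules out the high-probability reconstruction guaranteed by Theorem~\ref{thm:main1b} from only $\Omega(n^{1/6})$ genes, which is precisely the content of Theorem~\ref{thm:main2}.

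For the construction I would take $T_s^{(1)}$ and $T_s^{(2)}$ to be identical except inside a single deep ``gadget,'' engineered so that its distinguishing topological signal can be read off a gene tree only if a connected region of total $\time$-length $\Theta(\log n)$ escapes every HGT event. The relevant scales are favorable: the height of any bounded-rates ultrametric tree on $n$ leaves is $\Theta(\log n)$, and the number of HGT events whose recipient location is ancestral to a fixed node is Poisson with mean proportional to the $\time$-length of its root path, so any intact deep structure is exponentially unlikely. The gadget is chosen so that (i) there is a whole $\Theta(\log n)$-length family of synchronizing HGT events, any one of which relocates the distinguishing subtree identically in the two processes and thereby erases the difference; and (ii) the two gene trees can differ at the end only if none of these events fires, i.e. only on the event that the corresponding region is HGT-free. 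The probability of (ii) is $e^{-\Theta(\maxlgt \log n)} = n^{-\Theta(\maxlgt)}$, so taking $\maxlgt$ a sufficiently large constant drives $p$ below $n^{-1/6}$ with room to spare; this is where the hypothesis that $\maxlgt$ is large enough is used.

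The coupling is the technical heart of the argument and the place I expect the most work. Because an HGT event is a subtree-prune-and-regraft whose donor is a uniformly chosen contemporaneous location, the two processes do not split into independent local pieces: a single event can move an arbitrarily large subtree, and later events act on the already-modified tree. I would build the coupling chronologically, feeding both processes the same Poisson recipient/donor realizations: outside a negligible neighborhood of the gadget the two trees share the same locations and the same contemporaneous classes $\ball{x}{R}$, so one argues event by event that the two partial gene trees remain identical off the gadget. The crux is a synchronization lemma asserting that as soon as a synchronizing event from family (i) occurs, the two partial gene trees become equal and stay coupled for every subsequent event (here one uses that deleting the distinguishing subtree from either tree yields the same backbone, so regrafting it at a common donor equalizes them), while conversely a discrepancy can survive to the end only if no such event ever occurs. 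Making this dichotomy precise---handling the chronological dependence, matching events across the slightly mismatched locations near the gadget, discarding the measure-zero coincidences excluded in Definition~\ref{def:randomlgt}, and checking that unrooting and contraction of the observable preserve the coincidence---is the main obstacle; once it is in place, the union bound over the $N$ genes and the choice of $\maxlgt$ finish the proof.
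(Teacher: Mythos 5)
Your high-level skeleton (two phylogenies differing by a swapped gadget, a chronological coupling fed by common Poisson data, a per-gene coincidence probability $1-p$, and a union bound over the $N$ genes) matches the paper. But the mechanism you propose for making the two gene trees coincide does not work, and it hides the two steps that actually carry the paper's proof. Your dichotomy rests on the claim that a single ``synchronizing'' HGT event anywhere in a region of total $\time$-length $\Theta(\log n)$ relocates the distinguishing subtree identically in both processes and \emph{thereby erases the difference}. An SPR move only relocates the subtree below its recipient location; it never alters that subtree's internal structure. So if the recipient lies above the distinguishing structure, the moved subtree still carries the difference inside it, and if the recipient lies inside the structure, only part of it is evacuated. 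The only single events that could erase a difference are those whose recipient sits on the one edge immediately above a swapped subtree, and that edge has length at most $\maxtime = O(1)$, so the probability that no such event fires is a constant, not $n^{-\Theta(\maxlgt)}$. The paper's construction instead swaps two subtrees $\mathcal{T}_1,\mathcal{T}_3$ with $n^{1/3}$ leaves each and requires a \emph{transfer cut}: every root-to-leaf path inside each swapped subtree must contain at least one recipient location, so that every leaf is individually evacuated into the part of the phylogeny shared by the two trees. The failure probability of the cut is a first-moment percolation bound, $(1-p_{\bar{\lambda}})^{\frac{1}{3}\log_2 n}\cdot n^{1/3}$, over the $n^{1/3}$ leaves --- a genuinely different event from ``the region is HGT-free,'' and one whose complement is what must be made small.

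You also omit the donor side entirely. Even when every leaf of the swapped subtrees is evacuated, the coupling breaks if some transfer's \emph{donor} location lies inside $\mathcal{T}_1\cup\mathcal{T}_3$, since material donated there ends up attached at positions that differ between the two phylogenies. The paper's ``no in-moves'' event controls this: each transfer is an in-move with probability $2/n^{2/3}$, the number of transfers is Poisson with mean $\Theta(n^{1/3})$, and Markov's inequality plus a union bound give failure probability $O(n^{-1/6})$. This term does not improve as $\bar{\lambda}$ grows, so your assertion that taking $\maxlgt$ large drives $p$ below $n^{-1/6}$ ``with room to spare'' is not available; the exponent $1/6$ in the statement is precisely the balance point of the in-move analysis, not a tunable consequence of a large HGT rate. (A smaller point: the theorem is a standalone indistinguishability statement and neither contradicts nor needs to contradict Theorem~\ref{thm:main1b}, which assumes $\maxlgt$ small.)
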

\noindent The proof uses a coupling argument which is presented in Section~\ref{sec:proofs-2}. We point out that we were unable to obtain a provably correct reconstruction algorithm in this regime, {\it even assuming that the number of genes satisfies the conditions of Theorem~\ref{thm:main2}---or, in fact, even if $N = +\infty$.}
In particular, the question of the identifiability of the model remains an outstanding open problem in this area.

\section{Algorithmic result: $\epsilon$-distortions}
\label{sec:proofs}

In this section we provide the proof of Theorem~\ref{thm:main1}. 
In particular, we assume that $\sub_g(e) = \sub$ for all $e$.
Throughout this section, our {\bf Operating Assumptions} are the following:
We are given $\epsilon$-distortions $T_{g_1}',\ldots,T_{g_N}'$ of gene trees $T_{g_1},\ldots,T_{g_N}$, generated independently according to the random HGT model of Definition~\ref{def:randomlgt} from a species phylogeny $T_s = (V_s,E_s;\root,\time)$ with rates of horizontal transfer $\lgt(e)$ 
and a constant rate of substitution $\sub$ satisfying the bounded rates model of Definition~\ref{def:brm}. We assume in particular that $\epsilon < {\sub \cdot \mintime \over 2}$. Additionally $N \ge C \log n$ for a large enough constant $C$, and $\maxlgt$ a small enough constant, as required by all the lemmas established in this section. In particular, we will skip stating these assumptions in the statements of all lemmas.
To simplify the notation, we let
$$
\weight_s(u,v) = \sub\cdot\time(u,v).
$$
The proofs of the lemmas below can be found
in Section~\ref{sec:proofs-lemmas-distorted}.

\subsection{Reconstructing the Recent Past}

In this section, we show that the signal from the distorted gene trees is strong enough to reconstruct the recent past from the leaves of the species phylogeny. 
We use a distance-based approach.
The key observation, encapsulated in the following
lemma, is that median distances provide accurate estimates
of ``short distances.'' This intuitively
follows from the fact that, at small enough
rates of HGT, the path between two close-by
leaves is unlikely to be the site of an HGT event. 
In fact, the lemma
says a bit more: median distance estimates of 
\emph{long} distances are also guaranteed to exceed a 
threshold.
\begin{lemma}[Median distances are accurate estimates
	of short distances] \label{lem:one step}
For any constant $d_0 >0$, under our operating assumptions, for all $u,v \in L$, the following are true with probability at least $1-{1\over {\rm poly}(n)}$:
\begin{enumerate}
\item \label{lem:one step:close} \emph{Short distances.} 
If $\weight_s(u,v)\le d_0$, then 
${\rm median}_{i =1,\ldots,N}\{\weight_{g_i}'(u,v) \} = \weight_s(u,v) \pm  \epsilon;$
\item \label{lem:one step:far} \emph{Long distances.} If $\weight_s(u,v)>d_0$, then 
${\rm median}_{i =1,\ldots,N}\{\weight_{g_i}'(u,v) \} > d_0 -  \epsilon.$ 
\end{enumerate}
\end{lemma}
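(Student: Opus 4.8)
The plan is to prove Lemma~\ref{lem:one step} by showing that, for most genes, the $\epsilon$-distortion distance $\weight_{g_i}'(u,v)$ either equals the species distance $\weight_s(u,v)$ up to $\pm\epsilon$ (when the path is untouched by HGT) or exceeds the relevant threshold. The median then inherits these properties because a strict majority of the genes are well-behaved.

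I would first analyze a single gene. The key event is that no HGT event lands on the path $p(u,v)$ in the species phylogeny, or more precisely that the induced gene-tree path between $u$ and $v$ traverses exactly the same set of (partial) species edges as $p(u,v)$. Since HGT recipient locations along the tree form a Poisson process with total rate bounded by $\maxlgt \lgttotalextinct$, and the portion of the tree relevant to the path $p(u,v)$ has bounded $\lgt$-weight, the probability that some HGT event disrupts the $u$--$v$ path is at most a small constant $q$ depending on $\maxlgt$ (and on $d_0$ for the short-distance case, since the path length is then $O(d_0)$). When no such disruption occurs, the gene-tree path carries branch lengths $(\zeta_f - \zeta_b)\sub$ summing to exactly $\sub\cdot\time(u,v) = \weight_s(u,v)$, so the undistorted gene distance equals $\weight_s(u,v)$ and the $\epsilon$-distortion guarantee of Definition~\ref{def:distortion of gene tree} gives $|\weight_{g_i}'(u,v) - \weight_s(u,v)| \le \epsilon$. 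For the long-distance case, I would argue that even with HGT the gene-tree path cannot shrink below $d_0 - \epsilon$: on an ultrametric tree, HGT replaces a subpath by a contemporaneous detour, which by the triangle inequality and ultrametricity cannot make the $\weight$-distance smaller than the unaltered path down to the MRCA, so $\weight_{g_i}(u,v) \ge$ some quantity bounded below, and then subtracting $\epsilon$ for the distortion yields the claim; alternatively I would simply note that with good probability (at least $1-q$) the path is untouched, giving distance $> d_0$, hence $\weight_{g_i}'(u,v) > d_0 - \epsilon$.

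Next I would boost from a single gene to the median. By choosing $\maxlgt$ a sufficiently small constant, I can make the per-gene failure probability $q < 1/2$, say $q \le 1/4$. Then by a standard Chernoff/Hoeffding bound over the $N = \Omega(\log n)$ independent genes, the number of genes for which the path is disrupted is below $N/2$ with probability at least $1 - e^{-cN} = 1 - 1/\poly(n)$, where the constant $C$ in $N \ge C\log n$ is chosen large enough. On that high-probability event a strict majority of the values $\weight_{g_i}'(u,v)$ satisfy the good bound, so the median does too: in the short-distance case the median lies within $[\weight_s(u,v)-\epsilon, \weight_s(u,v)+\epsilon]$, and in the long-distance case more than half the values exceed $d_0 - \epsilon$, forcing the median above $d_0 - \epsilon$. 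Finally, a union bound over the $\binom{n}{2}$ pairs $u,v$ preserves the $1 - 1/\poly(n)$ guarantee after adjusting the constant $C$.

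The main obstacle I expect is the single-gene geometric claim, namely controlling precisely what the gene-tree path between $u$ and $v$ looks like after HGT and establishing that (i) the path is unchanged with probability bounded away from $1/2$ and (ii) even when changed, the long-distance lower bound of $d_0 - \epsilon$ cannot be violated. The delicate point is that an HGT event below the MRCA of $u$ and $v$ need not affect their pairwise distance at all, while one straddling the path can both shorten and lengthen it; I would need to characterize exactly which events matter by tracking the mappings $(\eta,\zeta_b,\zeta_f)$ from Definition~\ref{def:randomlgt} and using ultrametricity to rule out the distance dropping below threshold. Bounding the relevant Poisson intensity by a small multiple of $\maxlgt$ times the $\lgt$-weight of the subtree spanning $u$, $v$, and their ancestors is what ultimately lets the choice of small $\maxlgt$ drive $q$ below $1/2$.
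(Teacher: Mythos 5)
Your treatment of the short-distance case and of the boosting step (per-gene constant failure probability, concentration over $N=\Omega(\log n)$ genes, union bound over pairs) matches the paper's argument. The gap is in the long-distance case, where neither of your two proposed routes works. Route (a) rests on the claim that an HGT detour between contemporaneous locations ``cannot make the $\weight$-distance smaller''; this is false. If the recipient location $x$ lies on the path just above $u$ (say at $\weight_s$-distance $d_0/4$) and the donor location $y$ lies on the path just above $v$ at the same depth, the gene-tree distance becomes $\weight_g(u,v)=d_0/2$ even though $\weight_s(u,v)$ may be arbitrarily large: ultrametricity is preserved (depths to the root are unchanged), but the MRCA of $u$ and $v$ in the gene tree becomes much more recent---which is exactly the discordance HGT creates. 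Route (b) requires the entire path $p(u,v)$ to be free of transfers, but under the operating assumptions that path can have $\time$-length $\Theta(\maxtime\log n)$ (the regime of the theorem explicitly allows an expected $\Omega(\log n)$ HGT events per root-to-leaf path), so the no-transfer probability is $n^{-\Theta(\maxlgt\maxtime)}$ rather than a constant bounded away from $1/2$, and the median argument collapses for deep pairs.

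The paper's fix is local: pick the points $x,y$ on $p(u,v)$ with $\weight_s(x,u)=\weight_s(y,v)=d_0/2$, extend them to their nearest species-tree ancestors $\bar x,\bar y\in V_s$ (adding at most $\maxtime$ of time), and require only that the two bounded-length terminal segments $p(\bar x,u)$ and $p(\bar y,v)$ receive no transfer locations---a constant-probability event by a Poisson rate calculation (Claim~\ref{claim:no-transfer} in the paper's proof, which also controls donor locations via a thinning/superposition argument). On that event the gene-tree path from $u$ to $v$ must exit through $\bar x$ and enter through $\bar y$ with both terminal segments isometrically preserved, so $\weight_g(u,v)\ge d_0$ no matter what happens on the middle of the path. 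You correctly flagged that ``characterizing which events matter'' is the delicate point, but the resolution is not a global ultrametricity or triangle-inequality bound; it is this localization to the two terminal segments near $u$ and $v$.
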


How do we use Lemma~\ref{lem:one step} to reconstruct
the recent past?
Let us first formalize what we mean by the ``recent past.''
In essence, we truncate the species phylogeny at a 
fixed time in the past---thus producing a forest. However, because of the
distorted nature of our input, such a truncation must
be defined with care.
We will need the following notation. Given a rooted tree $T$ and a subset of its leaves $L'$, we denote by $T|L'$ the restriction of $T$ to leafset $L'$, i.e., the smallest connected subgraph of $T$ that contains $L' \cup \{\mrca(L')\}$.
\begin{definition}[Truncation of a phylogeny]
	\label{def:truncation}
	Given a phylogeny $T_s = (V_s,E_s;\root,\time)$ with leaf-set $L=[n]$ and some $D>\epsilon>0$, a {\em $(D,\epsilon)$-truncation of $T_s$} is a leaf-labeled forest $T_s^D=(V_s',E_s')$ with leaf-set $L=[n]$, satisfying the following properties:
	\begin{itemize}
		\item \emph{Disjoint forest.} For some $k \le n$, $T_s^D$ comprises $k$ rooted trees, with disjoint leaf-sets $L_1,\ldots,L_k$ which, further, correspond to clusters in the species phylogeny, that is, for all $1\le i \le j \le k$: $\mrca_{T_s}(L_i \cup L_j) \neq \mrca_{T_s}(L_i), \mrca_{T_s}(L_j).$ 
		
		\item \emph{Truncation.} Every pair of leaves $u, v \in L$, such that $\weight_s(u,v) \le D-2\epsilon$ belong to the same $L_i$, and every pair of leaves $u,v \in L$ such that $\weight_s(u,v)>D$ belong to different $L_i$'s.		
		
		\item \emph{Faithfulness.} For all $i=1,\ldots,k$, the leaf-labeled tree $T_s^D|L_i$ is isomorphic to the leaf-labeled tree $T_s|L_i$, under a leaf-label respecting isomorphism.

	\end{itemize}
\end{definition}
To reconstruct a truncation of the species
	phylogeny, we appeal to standard
dist\-ance-based concepts. See in particular~\cite{KiZhZh:03,Mossel:07}. 
We first recall a well-known approach for reconstructing ultrametric species trees. 
	An ultrametric tree naturally defines a system
	of nested clusters, sometimes called clades (see, e.g.,~\cite{SempleSteel:03}). Indeed, for each vertex
	$v$ in a species phylogeny $T_s$, consider the set $A_v$ of all leaves below $v$, that is, leaves for which $v$ is an ancestor. For all pairs of vertices $u, v$ in $T_s$, we have that either $A_u \cap A_v = \emptyset$ (neither $u$ nor $v$ is an ancestor of the other one),
	$A_u \subseteq A_v$ ($v$ is an ancestor of $u$) or $A_v \subseteq A_u$ ($u$ is an ancestor of $u$). 
	We say that such sets are \emph{nested}.
	Reconstructing the topology of $T_s$ is equivalent
	to reconstructing this system of nested clusters.
	If one is given a species metric $\weight_s$, obtaining
	these clusters is straightforward using, for instance,
	single-linkage clustering: iteratively join the closest pair of reconstructed clusters, where the distance between two clusters is defined as the shortest distance between their respective elements. 
	
	However, we are not given $\weight_s$---what we have is an estimate that is reliable only over short distances 
	\begin{equation}\label{eq:dhat}
	\forall u,v \in L,\qquad \hat{d}(u,v) := {\rm median}_{i =1,\ldots,N}\{\weight_{g_i}'(u,v) \}.
	\end{equation}
	Moreover, we only seek to reconstruct a truncation of the species phylogeny. We explain how to do this in the next lemma.
\begin{lemma}[Building a truncation] \label{lem:trees-from-distorted-metrics}
Assume that $\hat{d}$, as defined in~\eqref{eq:dhat}, satisfies the statement of
Lem\-ma~\ref{lem:one step} with 
$\epsilon < {\mintime \cdot \sub \over 2}$ for some $d_0$. Then a $(d_0,\epsilon)$-truncation of $T_s$ can be computed in polynomial-time (e.g., by single-linkage clustering). 
\end{lemma}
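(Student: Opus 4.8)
The plan is to run single-linkage clustering on the estimated distances $\hat{d}$ at a carefully chosen threshold and to verify, deterministically given the assumption, that the resulting partition of $L$ together with the induced topologies is a valid $(d_0,\epsilon)$-truncation. By ultrametricity I write $h(w)$ for the common $\weight_s$-distance from an internal vertex $w$ to its descendant leaves, so that $\weight_s(u,v)=2h(\mrca_{T_s}(u,v))$ for leaves $u,v$; note that for a child $w'$ of $w$ one has $2h(w')\le 2h(w)-2\mintime\sub$, and since $\epsilon<\mintime\sub/2$ the heights of nested (comparable) vertices differ by more than $2\epsilon$. Fix the threshold $\theta=d_0-\epsilon$ and let $G_\theta$ be the graph on $L$ joining $u\sim v$ whenever $\hat{d}(u,v)\le\theta$; the clusters $L_1,\dots,L_k$ will be the connected components of $G_\theta$.

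First I would extract the two extreme regimes from Lemma~\ref{lem:one step}. If $\weight_s(u,v)\le d_0-2\epsilon$, the short-distance case gives $\hat{d}(u,v)\le\weight_s(u,v)+\epsilon\le\theta$, so $u$ and $v$ are adjacent in $G_\theta$ and lie in the same component. Conversely, any two leaves in a common component satisfy $\weight_s\le d_0$: along a connecting path $x_0,\dots,x_m$ each step has $\hat{d}(x_i,x_{i+1})\le\theta=d_0-\epsilon$, so the long-distance case of Lemma~\ref{lem:one step} forces $\weight_s(x_i,x_{i+1})\le d_0$, whence the short-distance case gives $\weight_s(x_i,x_{i+1})\le\hat{d}(x_i,x_{i+1})+\epsilon\le d_0$, and the ultrametric inequality propagates the bound $d_0$ to the endpoints. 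Hence every component has $\weight_s$-diameter $\le d_0$ and pairs at species distance $>d_0$ fall in distinct components, establishing the Truncation property.

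The crux, and the main obstacle, is to show every component $C$ is a \emph{full} clade $A_w$ of $T_s$: because $\hat{d}$ is accurate only up to $\pm\epsilon$, cross pairs whose true distance lies in the ``gray zone'' $(d_0-2\epsilon,d_0]$ may or may not induce an edge of $G_\theta$, so a priori a component could slice through a clade. I would resolve this by a two-case argument. Let $w=\mrca_{T_s}(C)$; the diameter bound (realized by a cross pair of the two child-subtrees of $w$) gives $2h(w)\le d_0$. If $2h(w)\le d_0-2\epsilon$, every pair in $A_w$ is adjacent in $G_\theta$, forcing $C=A_w$. Otherwise $2h(w)\in(d_0-2\epsilon,d_0]$, and each child $w'$ of $w$ satisfies $2h(w')\le 2h(w)-2\mintime\sub<d_0-2\epsilon$; by the first case each child clade $A_{w'}$ lies entirely in a single component, and since $\mrca_{T_s}(C)=w$ forces $C$ to meet both child clades, both are contained in $C$, so again $C=A_w$. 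Thus, regardless of how the gray-zone edges fall, $w$ is either ``cut through'' (yielding $A_w$) or split cleanly into its two child clades—both outcomes being clades. Being clades, the components are pairwise nested-or-disjoint, and being a partition they are disjoint, hence have pairwise incomparable MRCAs, which is exactly the Disjoint forest property.

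Finally, for Faithfulness I would reconstruct the topology inside each cluster $L_i=A_{w_i}$ by continuing single-linkage clustering on $\hat{d}$ restricted to $L_i$. Since all pairs in $L_i$ have $\weight_s\le 2h(w_i)\le d_0$, the short-distance case makes $\hat{d}$ an $\epsilon$-approximation of the ultrametric $\weight_s$ on $L_i$; because the heights of nested internal vertices differ by more than $2\epsilon$ the relative order of comparable merges is preserved under the perturbation, while swapping the order of incomparable merges (operations on disjoint subtrees) does not change the resulting topology, so single-linkage on $\hat{d}|_{L_i}$ returns a dendrogram leaf-label-respecting isomorphic to $T_s|L_i$. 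All steps—forming $G_\theta$, extracting its components, and the within-cluster clustering—are standard single-linkage computations running in time polynomial in $n$, which gives the claim. Everything outside the clade-structure argument of the previous paragraph is routine distance-based bookkeeping.
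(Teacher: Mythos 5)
Your proposal is correct and follows essentially the same route as the paper: single-linkage clustering at threshold $d_0-\epsilon$, with the key claim being the sandwich $\mathcal{C}_s[d_0-2\epsilon]\subseteq\widehat{\mathcal{C}}\subseteq\mathcal{C}_s[d_0]$ (your two-case clade argument and the merge-order perturbation bound are precisely the details the paper defers to a cited induction argument). No gaps.
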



%
%

\subsection{Reaching Deeper into the Past} \label{sec:deeper}

 	Our goal in this section is to reach deeper into
 	the species phylogeny. The minimum distance
 	scheme in Formula~\eqref{eq:dhat-clusters} 
 	is unfortunately not accurate beyond a large constant. 
 	Instead, our basic idea is to bootstrap the median estimator in~\eqref{eq:dhat}. \emph{However, there is
 	a significant hurdle.} Although the leaves
 	of a gene tree and of the species phylogeny
 	trivially match, the same does not hold deeper
 	into the past because of the extensive HGT observed
 	under the rates we consider here. Rather 
 	we introduce a notion of ``conserved'' subtrees.
 	For this purpose, we borrow a combinatorial concept of diluted subtrees from~\cite{Mossel:01}.
 	We use diluted subtrees to show that,
 	for any given gene and any given internal
 	vertex of the species phylogeny, with probability close to $1$
 	there is a ``dense'' subtree of the species
 	phylogeny which has not been modified by the HGT
 	process and, therefore, is shared between
 	the gene tree and the species phylogeny.
\begin{definition}[Diluted subtree]\label{def:diluted tree}
Let $T$ be a binary tree rooted at $\root$. A subtree $T'$ of $T$ is called a {\em diluted subtree of $T$} if $T'$ is rooted at $\root$ and, for all nodes $u$ in both $T'$ and $T$, if $u$ is at (topological) depth 
the $\ell$ from $\root$ with $\ell~{\rm mod}~3=0$, then the number of descendants of $u$ at depth $\ell+1$ in $T$ and $T'$ are equal, the number of descendants of $u$ at depth $\ell+2$ in $T$ and $T'$ are also equal, and the number of descendants of $u$ at depth $\ell+3$ in $T$ and $T'$ are within $1$.
\end{definition}  
\begin{definition}[Containing a diluted subtree]
Given a leaf-labeled tree $T$ rooted at $u$, we say that a leaf-labeled rooted tree $T'$ {\em contains a diluted subtree of $T$} if a real subtree of $T'$ is leafsomorphic to a diluted subtree of $T$.
\end{definition}
\begin{lemma}[Conserved subtrees]\label{lem:exists-diluted-subtree}
Consider the leaf labeled tree $u \downarrow T_s$, rooted at some node $u \in V_s$ of phylogeny $T_s$, and a gene tree $T_g$ generated from $T_s$ according to the process of Definition~\ref{def:randomlgt}. For all $\delta>0$, under our operating assumptions, $T_g$ contains a diluted subtree of $u \downarrow T_s$, with probability at least $1-\delta$. In particular, with probability at least $1-\delta$ for any given gene, there exists a diluted subtree of $u \downarrow T_s$ that does not receive any recipient locations during the HGT process of Definition~\ref{def:randomlgt}.
\end{lemma}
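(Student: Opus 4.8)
The plan is to reduce the statement to a percolation-type survival estimate on the binary tree $u \downarrow T_s$, and to prove directly the second (stronger) assertion. Call an edge of $u \downarrow T_s$ \emph{open} if it receives no recipient location. I claim it suffices to exhibit, with probability at least $1-\delta$, a diluted subtree $D$ of $u \downarrow T_s$ all of whose edges are open. Indeed, since $D$ is a connected subtree rooted at $u$, every edge of $u \downarrow T_s$ outside $D$ leads into a node outside $D$, so a recipient on such an edge prunes a subtree containing no leaf of $D$; and a recipient above $u$ merely relocates $u \downarrow T_s$ (hence $D$) as a block. Consequently, for any two leaves of $D$ the path through their MRCA in $D$ survives intact in $T_g$ (up to subdivision by donor insertions, which are suppressed under leafsomorphism and whose grafted subtrees lead to leaves outside $D$). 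Thus the real subtree of $T_g$ spanned by the leaves of $D$ is leafsomorphic to $D$, and $T_g$ contains a diluted subtree of $u \downarrow T_s$.

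By the Poisson construction of Definition~\ref{def:randomlgt} the events $\{e \text{ open}\}$ are independent across edges, with $\P[e \text{ open}] = e^{-\lgt(e)\time(e)} \ge 1 - p$, where $p := \maxlgt\,\maxtime$. I would then organize the search by the three-level blocks built into Definition~\ref{def:diluted tree}: for a node $w$ at topological depth $\equiv 0 \pmod 3$, let $S_w$ be the event that $w \downarrow T_s$ admits a diluted subtree rooted at $w$ using only open edges. The block condition at $w$ requires all (at most six) edges down to its grandchildren to be open and at least seven of its (at most eight) great-grandchild edges to be open, after which one recurses on the surviving great-grandchildren, which again sit at depth $\equiv 0 \pmod 3$. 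Since distinct great-grandchild subtrees use disjoint edge sets, the block edges and the sub-events $S_{w_i}$ are mutually independent.

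The heart of the argument is a uniform-in-depth lower bound on $\P[S_w]$, proved by induction from the leaves upward. Writing $s := \min_i \P[S_{w_i}]$ and $t := (1-p)s$ (the probability that a given great-grandchild edge is open and its subtree survives), the ``lose at most one of eight'' rule gives
$$\P[S_w] \ge (1-p)^6\,(8 t^7 - 7 t^8),$$
since $8t^7-7t^8$ is the probability of at least seven successes among eight independent trials of probability $t$. The key structural point — and the reason the definition uses three-level blocks with one-in-eight slack — is that $g(t) = 8t^7 - 7t^8$ satisfies $g(1)=1$ and $g'(1)=0$, so its fixed point at $t=1$ is super-attracting; the only first-order loss per block is the factor $(1-p)^6 \approx 1-6p$ from the six mandatory edges. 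I would therefore posit $\eta^* := 10p$ and verify, for $p$ below an absolute constant, that the induction closes: if each $\P[S_{w_i}] \ge 1-\eta^*$ then $t \ge (1-p)(1-\eta^*)$ and a short Taylor expansion of $(1-p)^6(8t^7-7t^8)$ yields $\P[S_w] \ge 1-\eta^*$. The base cases — leaves (where $\P[S_\cdot]=1$) and nodes of height at most two (failure at most $6p\le\eta^*$, the depth-three condition being vacuous) — satisfy the bound, and the unbalanced cases where $w$ has fewer than eight great-grandchildren only reduce the edge counts. Applying this at the root $u$ (depth $0 \equiv 0 \pmod 3$) gives $\P[\text{no open diluted subtree}] \le \eta^* = 10\,\maxlgt\,\maxtime$, which is at most $\delta$ once $\maxlgt$ is taken small enough, as in the operating assumptions.

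The step I expect to be most delicate is precisely this uniform-in-depth induction: ensuring that the per-block loss does not accumulate over the $\Theta(\log n)$-deep tree. This is exactly what the super-attracting fixed point of $g$ buys, and the crux is checking that $\eta^* = O(p)$ is a genuine fixed point of the one-step recursion rather than a slowly growing sequence. A secondary technical nuisance is the bookkeeping of base and boundary cases arising from unbalanced parts of the phylogeny, where leaf depths differ and some branches terminate before completing a three-level block; these only help the bound, but must be handled to make the induction airtight.
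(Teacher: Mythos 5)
Your proposal is correct and takes essentially the same route as the paper: it reduces the lemma to a percolation statement in which an edge of $u \downarrow T_s$ is open, independently across edges with probability at least $1-\maxlgt\maxtime$, exactly when it receives no recipient location, and then shows an all-open diluted subtree exists with probability $1-O(\maxlgt\maxtime)$. The only difference is that the paper delegates the survival estimate to Lemmas 6--8 of \cite{Mossel:01}, whereas you prove it directly via the three-level block recursion $(1-p)^6(8t^7-7t^8)$; your fixed-point check at $\eta^*=10p$ (using $1-g(t)\le 28(1-t)^2$, so that $6p+28(11p)^2\le 10p$ for $p$ below an absolute constant) is valid, as is your handling of the unbalanced base cases.
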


With the concept of a diluted subtree and Lemma~\ref{lem:exists-diluted-subtree}, we can generalize Lemma~\ref{lem:one step} to the following statement. Note that this proposition is only
	existential. We show how to actually \emph{compute} the diluted subtrees and the corresponding cluster distances in the next subsection. We extend
	$\weight_s$ to clusters as before. That is, letting
	$u$ and $v$ be vertices in $T_s$ neither of which
	is an ancestor of the other and letting
	$A_u$ and $A_v$ be the corresponding clusters (i.e., descendant leaves),
	we have
	$
	\weight_s(A_u, A_v) = \min_{a \in A_u, b \in A_v} \weight_s(a,b).
	$
\begin{proposition}[Induction step: Diluted subtrees and distance estimates] \label{lem:generalized one step} 
Con\-sider
constants $d_0, \eta>0$ and a pair of nodes $u, v$ of the phylogeny $T_s$, neither of which is an ancestor of the other. Under our operating assumptions, a distorted gene tree $T_g'$ satisfies the following with probability at least $1-\eta$:
\begin{itemize}
\item \emph{Diluted subtree at $u$.} $T_g'$ contains a real subtree $T_u'$ rooted at some node $u'$ that is leafsomorphic to a diluted subtree of $u \downarrow T_s$; moreover, any such subtree $T_u''$ has the same root;
\item \emph{Diluted subtree at $v$.} $T_g'$ contains a real subtree $T_v'$ rooted at some node $v'$ that is leafsomorphic to a diluted subtree of $v \downarrow T_s$; moreover, any such subtree $T_v''$ has the same root.
\end{itemize}
Moreover, for any such subtrees $T_u''$ and $T_v''$:
\begin{itemize}
\item 
If $\weight_s(u,v)\le d_0$, $\weight_g'(\ell_1,\ell_1') = \weight_s(A_u,A_v) \pm  \epsilon$
	for any leaves $\ell_1$ of $T_u''$ and $\ell_1'$ of $T_v''$; \label{lem:general step:close}
\item 
If $\weight_s(u,v)>d_0$, $\weight_g'(\ell_1,\ell_1') > \weight_s(A_u,A_v) - \weight_s(u,v)+ d_0 -  \epsilon$ for any 
leaves $\ell_1$ of $T_u''$ and $\ell_1'$ of $T_v''$. \label{lem:general step:far}
\end{itemize}
\end{proposition}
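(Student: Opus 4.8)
The plan is to reduce the statement to two structural facts. First, with probability close to $1$ the gene tree contains conserved (HGT-free) diluted subtrees rooted at the images $u'$ and $v'$ of $u$ and $v$; this is exactly Lemma~\ref{lem:exists-diluted-subtree}, applied once at $u$ and once at $v$ with $\delta$ set to a small fraction of $\eta$. Second, the gene tree $T_g$ is itself ultrametric with respect to $\weight_g$: every HGT event is an SPR move between contemporaneous locations and so preserves the $\time$-depth of each node, and with the constant rate $\sub$ we have $\weight_g=\sub\cdot\time$ along every branch. Write $t_x=\time(x,\ell)$ for the common time-depth of a node $x$ above its descendant leaves, and set $h_u=\sub\,t_u$, $h_v=\sub\,t_v$. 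Since neither $u$ nor $v$ is an ancestor of the other, $w:=\mrca(u,v)$ is a strict ancestor of both and $\mrca(a,b)=w$ for all $a\in A_u$, $b\in A_v$; ultrametricity then gives $\weight_s(a,b)=2\sub\,t_w$ for all such $a,b$, so $\weight_s(A_u,A_v)=\weight_s(\ell_1,\ell_1')=h_u+\weight_s(u,v)+h_v$ for the specific leaves $\ell_1\in A_u$, $\ell_1'\in A_v$. This identity is what lets me trade the cluster distance for a leaf distance.

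For the close case $\weight_s(u,v)\le d_0$, I would additionally condition on the event that the $u$--$w$--$v$ path in $T_s$ receives no recipient location in this gene. Its $\time$-length is $\weight_s(u,v)/\sub\le d_0/\sub$, a constant, so the probability of no recipient is at least $1-O(\maxlgt\, d_0/\sub)$, which I push above $1-\eta/4$ by taking $\maxlgt$ small. Under this event together with the two conserved subtrees, the unique $T_g$-path from $\ell_1$ to $\ell_1'$ runs $\ell_1\to u'\to w'\to v'\to\ell_1'$: the two end segments have lengths $h_u$ and $h_v$ because the diluted subtrees are HGT-free, and the middle segment has length exactly $\weight_s(u,v)$ because no recipient cuts the $u$--$v$ path (donor grafts merely hang subtrees off it and leave the total branch length unchanged). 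Summing gives $\weight_g(\ell_1,\ell_1')=\weight_s(A_u,A_v)$, and the $\epsilon$-distortion supplies the $\pm\epsilon$.

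For the far case $\weight_s(u,v)>d_0$ I only need a lower bound, so I would instead invoke gene-tree ultrametricity: $\weight_g(\ell_1,\ell_1')=2\sub\cdot t^\ast$ where $t^\ast$ is the time-depth of $\mrca_{T_g}(\ell_1,\ell_1')$, and the far hypothesis is equivalent to $t_w>(t_u+t_v)/2+\delta_0$ with $\delta_0:=d_0/(2\sub)$. I condition on both lineages being HGT-free up to time-depth $c:=(t_u+t_v)/2+\delta_0$; below $c$ both then agree with the species tree, where they have not yet coalesced (their coalescence is at $t_w>c$), so $t^\ast\ge c$ and $\weight_g(\ell_1,\ell_1')\ge 2\sub c=h_u+h_v+d_0$, again up to $\pm\epsilon$. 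The extra cleanness needed above $u'$ and $v'$ is $\tfrac12|t_u-t_v|+\delta_0$; this is a constant, with probability bounded away from $0$ uniformly in $n$, precisely because the proposition is invoked on frontier nodes $u,v$ lying at the same level of the ultrametric phylogeny, so $t_u=t_v$ and the required cleanness is just $\delta_0$ on each side.

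The main obstacle, and the part I would treat most carefully, is the root-uniqueness assertion (``any such $T_u''$ has the same root'') together with the guarantee that the leaf-to-leaf path genuinely factors through $u'$ and $v'$. Lemma~\ref{lem:exists-diluted-subtree} controls only recipient (pruning) events inside a subtree and says nothing about donor (grafting) events, which could in principle attach foreign leaves beneath $u'$ or nest $T_v''$ under $T_u''$ and short-circuit the path. I would handle this by showing that, on the good event, any real subtree of $T_g'$ leafsomorphic to a diluted subtree of $u\downarrow T_s$ has leaf set a dense two-sided subset of $A_u$ whose $T_g'$-MRCA is forced to be the root $u'$ of the conserved subtree—two such images overlap in all but a constant fraction of their leaves, pinning the common root—and that $u'$, $v'$ are non-nested with high probability, so the unique leaf-to-leaf path passes through both. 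I would then assemble the bound by a union over the two conserved-subtree failures (rate $\delta$), the recipient-free-path (respectively lineage-cleanness) failure controlled by $\maxlgt$, and the non-nesting failure, choosing $\delta$ and $\maxlgt$ small enough that the total is at most $\eta$.
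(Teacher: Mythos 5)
Your overall architecture is the same as the paper's: Lemma~\ref{lem:exists-diluted-subtree} at $u$ and at $v$, control of transfers on the connecting path, ultrametricity of $\weight_g$, and the identity $\weight_s(A_u,A_v)=\weight_s(\ell_1,u)+\weight_s(u,v)+\weight_s(v,\ell_1')$; your close case is essentially the paper's. But there are two genuine gaps. The first is quantitative, in your far case: you need both lineages HGT-free up to time-depth $c=(t_u+t_v)/2+\delta_0$, which is a clean stretch of length $\tfrac12|t_u-t_v|+\delta_0$ above the shallower node. The proposition is stated for \emph{all} pairs of non-ancestral nodes and is union-bounded over all such pairs in the proof of Theorem~\ref{thm:main1}, and $|t_u-t_v|$ can grow with $n$; your escape hatch --- that $t_u=t_v$ because $u,v$ are ``frontier nodes at the same level'' --- is a hypothesis that neither the statement nor the algorithm supplies (single-linkage merges clusters whose roots sit at different depths). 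Over a stretch of non-constant length the no-transfer probability is not $1-O(\maxlgt)$, so the $1-\eta$ bound fails. The paper avoids this exactly as in the long-distance case of Lemma~\ref{lem:one step}: it places points at $\weight_s$-distance $d_0/2$ from $u$ and from $v$ \emph{on the connecting path} and requires cleanliness only on those two constant-length segments, which yields $\weight_g(u,v)>d_0$ directly, to which the conserved end segments $\weight_s(\ell_1,u)$ and $\weight_s(v,\ell_1')$ are then added.

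The second gap is the root-uniqueness step, which you flag as the main obstacle but do not resolve. Your proposed mechanism --- two images ``overlap in all but a constant fraction of their leaves, pinning the common root'' --- is not the right invariant: two real subtrees of $T_g'$ can share almost all of their leaves and still be rooted at different (nested) vertices. What forces root equality, and what the paper proves in Claim~\ref{claim:little claim}, is purely combinatorial: by the degree requirements of Definition~\ref{def:diluted tree} and pigeonhole, any two diluted subtrees of $u\downarrow T_s$ share a pair of leaves joined to the common root by \emph{disjoint} paths; a real subtree of $T_g'$ leafsomorphic to either one therefore has its root at the unique branch point of that pair in $T_g'$, so all such subtrees share a root. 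This is deterministic --- no ``non-nesting with high probability'' event is needed for uniqueness. (The separate worry you raise, that donor grafts could nest $v'$ under $u'$ and short-circuit the leaf-to-leaf path, is legitimate but is handled by the fact that the no-transfer event of Claim~\ref{claim:no-transfer} excludes donor as well as recipient locations on the relevant path, so $\mrca_{T_g}(\ell_1,\ell_1')$ lies above both $u$ and $v$; it does not require a new probabilistic ingredient.)
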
 

\subsection{Computing Diluted Subtrees}

It remains to show how to compute diluted subtrees.
\begin{proposition}[Induction step: Computing diluted subtrees] \label{lem:identifying diluted tree}
Given a leaf-lab\-eled tree $T$ rooted at $u$ and another leaf-labeled tree $T'$ rooted at $u'$, where both $T$ and $T'$ have the same leaf-set $L$ and they both have internal nodes of outdegree $2$, we can identify in polynomial-time a real subtree of $T'$ that is leafsomorphic to a diluted subtree of $T$, if such a subtree exists in $T'$.
\end{proposition}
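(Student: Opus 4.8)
The plan is to design a dynamic program over pairs of vertices, one from $T$ and one from $T'$, that decides whether a diluted subtree of $T$ can be matched leafsomorphically to a real subtree of $T'$. First I would record a reformulation. A diluted subtree $D$ of $T$ (Definition~\ref{def:diluted tree}) is determined by its leaf set together with the choice, at each block-root (a vertex at depth $\ell$ with $\ell \bmod 3 = 0$), of which great-grandchild to drop; after suppressing degree-$2$ vertices its topology is exactly $T$ restricted to $D$'s leaves, and likewise a real subtree of $T'$ (Definition~\ref{def:compound definition}) suppresses to $T'$ restricted to its leaf set. Hence the task is equivalent to finding a leaf subset $S \subseteq L$ that (i) is the leaf set of some diluted subtree of $T$ rooted at the root $u$ of $T$, and (ii) satisfies that $T|S$ and $T'|S$ are isomorphic as rooted, leaf-labelled trees, with $\mrca_T(S)=u$ corresponding to $\mrca_{T'}(S)$.

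For the dynamic program I would define, for $x \in V(T)$ and $x' \in V(T')$, the predicate $F(x,x')$ to be true precisely when $x \downarrow T$ admits a diluted subtree leafsomorphic to a real subtree of $x' \downarrow T'$ rooted at $x'$. The proposition then answers ``yes'' iff $F(u,x')$ holds for some $x'$, and a witnessing subtree is recovered by backtracking. I would compute $F$ bottom-up, unfolding one three-level block of $T$ at a time. Since $T$ is binary, a block-root $x$ has $2$ children, at most $4$ grandchildren and at most $8$ great-grandchildren, the latter being the block-roots of the next level; the dilution rule permits dropping at most one great-grandchild across the \emph{whole} block, so there are only $O(1)$ admissible drop choices. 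For each choice I form the resulting constant-size top-block shape $B$, whose leaves are the retained great-grandchildren $g_1,\dots,g_m$, taking care that dropping a great-grandchild may leave its parent grandchild with a single child, which is then suppressed and alters $B$. I then embed $B$ into $x'\downarrow T'$ as an induced topology rooted at $x'$: assign each $g_i$ to a vertex $g_i'\in x'\downarrow T'$ so that $T'|\{g_1',\dots,g_m'\}$, rooted at $x'$, is isomorphic to $B$. Because $B$ has constant size, all valid assignments are found by a standard tree-pattern-matching DP in time polynomial in $|V(T')|$, and the induced-topology condition forces the $g_i'$ to lie in pairwise incomparable subtrees, so the $g_i'\downarrow T'$ are disjoint. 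I set $F(x,x')$ true iff some drop choice and some embedding satisfy $\bigwedge_i F(g_i,g_i')$.

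Base cases arise when $x\downarrow T$ has fewer than three levels below $x$: no dilution is possible, and I simply test whether the full topology of $x\downarrow T$ embeds as an induced subtree of $x'\downarrow T'$ rooted at $x'$, again a constant-versus-$T'$ matching. There are $O(n^2)$ states, each resolved in polynomial time, so the procedure runs in polynomial time, and backtracking through the accepting choices reconstructs an explicit real subtree. The main obstacle will be the bookkeeping at block boundaries. The global ``at most one drop per block'' constraint couples all branches inside a block, so a naive per-vertex drop decision in a level-by-level recursion could drop several nodes in one block; unfolding the entire constant-size block at once is precisely what enforces the constraint. Moreover, the interaction between dropping a great-grandchild and the subsequent suppression of its now-degree-$2$ parent must be tracked when forming $B$, since it changes which induced topology in $T'$ must be matched, including the suppressed ``stems'' of $T'$ lying above each $g_i'$. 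Verifying that these finitely many block configurations are enumerated exhaustively and matched faithfully is the delicate part; the remaining correctness and counting arguments are routine given the constant block size.
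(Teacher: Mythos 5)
Your proposal is correct and follows essentially the same route as the paper's proof: a bottom-up dynamic program over pairs of vertices of $T$ and $T'$ that processes one three-level block at a time, enumerates the $O(1)$ choices of dropped great-grandchild, matches the resulting constant-size block shape against $T'$, and recurses on the retained great-grandchildren. The only notable difference is that the paper first proves (by pigeonhole on the diluted-tree degree requirements) that the root of any real subtree of $T'$ leafsomorphic to a diluted subtree of a given $w \downarrow T$ is unique, which lets it maintain a single-valued inverse map rather than enumerating all block embeddings, and it explicitly folds the labeled leaves at depths $1$ and $2$ of each block into the matching step --- a detail your description of the block shape $B$ leaves implicit.
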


\subsection{Theorem~\ref{thm:main1}}
\label{sec:proof-finish}

Using Propositions~\ref{lem:generalized one step} and~\ref{lem:identifying diluted tree}, we are now ready
to prove Theorem~\ref{thm:main1}.

\smallskip

\begin{prevproof}{Theorem}{thm:main1}
For every pair $u, v \in V_s$ neither of which is an ancestor of the other, it follows from Proposi\-tion~\ref{lem:generalized one step} and standard concentration inequalities~\cite{MotwaniRaghavan:95} that, with probability at least $1-{1 \over {\rm poly}(n)}$:
\begin{equation}\label{eq:final2}
\weight_s(u,v)\le d_0 \implies
	{\rm median}_{i \in N_{u,v}}\{\weight_{g_i}'(\ell_i,\ell_i') \} = \weight_s(A_u,A_v) \pm  \epsilon;
\end{equation} 
\begin{equation}\label{eq:final1}
\weight_s(u,v)>d_0 \implies
{\rm median}_{i \in N_{u,v}}\{\weight_{g_i}'(\ell_i,\ell_i') \} > \weight_s(A_u,A_v) - \weight_s(u,v) + d_0 -  \epsilon;
\end{equation} 	
where $N_{u,v}$ is the subset of distorted gene trees that contain a diluted subtree of $u \downarrow T_s$ and of $v \downarrow T_s$. For every such gene tree we let $\ell_i,\ell_i'$ be arbitrary leaves of subtrees that are leafsomorphic to a diluted subtree of $u \downarrow T_s$ and $v \downarrow T_s$ respectively.
Since there are $O(n^2)$ pairs of $u,v \in V_s$, by a union bound, Equations~\eqref{eq:final2} and~\eqref{eq:final1} simultanenously hold for all pairs of $u,v \in V_s$, with probability at least $1-{1 \over {\rm poly}(n)}$. We condition on this event.

We now describe our high-level reconstruction
algorithm. We proceed similarly to the proof of
Lemma~\ref{lem:trees-from-distorted-metrics},
although we employ a slightly different implementation
of single-linkage clustering.
But, instead of the update formula~\eqref{eq:dhat-clusters},
whenever a new cluster is formed, we compute
a diluted subtree of the corresponding tree
and use it to estimate inter-cluster distances
using the median as above. More precisely:
\begin{enumerate}
	\item 
	Let $\mathcal{F} = \{\{u\}\,:\, u \in [n]\}$, set $\hat{d}$ as in~\eqref{eq:dhat} 
	and, for all $u \in [n]$, let $T_{\{u\}}$ be the tree
	composed of only $u$. 
	
	\item Until $\mathcal{F} = \{[n]\}$:
	\begin{enumerate}
		\item Let $A, B$ be two clusters in $\mathcal{F}$ achieving the minimum $\hat{d}$-distance.
		\item Update $\mathcal{F}$ by removing
		$A,B$ and adding $A\cup B$. 
		\item Let $T_{A\cup B}$ be the tree corresponding to the cluster $A \cup B$.
		Let $\rho_{A\cup B}$ be the
		root of $T_{A\cup B}$.
		\item For each gene $i \in N$, compute 
		a real subtree $\tilde{T}_{A\cup B}^i$ of $T_{g_i}$ that is leafsomorphic to a diluted subtree of $T_{A\cup B}$, if such a subtree exists,
		as detailed in the proof of Proposition~\ref{lem:identifying diluted tree}.
		Let $\ell_{A\cup B}^i$ be an arbitrary leaf of $\tilde{T}_{A\cup B}^i$.
		\item Update: for each
		$F \in \mathcal{F}$ with $F \neq A\cup B$,
		set
		$
		\hat{d}(F,A\cup B)
		:= {\rm median}_{i \in N_{\rho_F,\rho_{A\cup B}}}\{\weight_{g_i}'(\ell_{F}^i,\ell_{A\cup B}^i)\}. 
		$
	\end{enumerate}
\end{enumerate}

Arguing as in Lemma~\ref{lem:trees-from-distorted-metrics}
and using~\eqref{eq:final2} and~\eqref{eq:final1},
it follows that running the above algorithm
up to any distance $D$ produces a $(D,\epsilon)$-truncation
of $T_s$. That concludes the proof.
\end{prevproof}

\subsection{Proofs}
\label{sec:proofs-lemmas-distorted}

\begin{prevproof}{Lemma}{lem:one step}
	First, we will need the following claim. 
	For a pair of locations $x,y\in\locations_s$,
	we let $p_s(x,y)$ be the path between $x$ and $y$.
	\begin{claim}\label{claim:no-transfer}
	For all $\time^* > 0$ and $\delta^* < 1$, there is a
	$\maxlgt > 0$ small enough so that the following
	holds. For all pairs of locations $x,y\in\locations_s$
	such that $x$ is an ancestor of $y$
	and $\time(x,y) \leq \time^*$: the probability
	that, on a gene tree, no recipient or donor location 
	lies on $p_s(x,y)$ is at least
	$1 - \delta^*$.
	\end{claim}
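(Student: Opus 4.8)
The plan is to bound directly the probability that the path $p_s(x,y)$ receives at least one recipient \emph{or} donor location, and to make this bound small by taking $\maxlgt$ small. I would split the bad event into two pieces and control each by a first-moment (union) bound, which conveniently avoids any independence considerations: the event $R$ that some \emph{recipient} location falls on $p_s(x,y)$, and the event $D$ that some \emph{donor} location falls on $p_s(x,y)$. Since $x$ is an ancestor of $y$, the path is a vertical ancestor--descendant segment occupying exactly the time levels $t \in [t_x, t_y]$, where $t_x = \time(\root,x)$, $t_y = \time(\root,y)$ and $t_y - t_x = \time(x,y) \le \time^*$, and it meets each such level in a single point.

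The recipient piece is straightforward. Recipient locations form a Poisson process of rate $\lgt(e) \le \maxlgt$ along each branch, so by Poisson restriction the number of recipients on $p_s(x,y)$ is Poisson with mean $\sum_{e} \lgt(e)\cdot(\text{length of } p_s(x,y)\cap e) \le \maxlgt\,\time(x,y) \le \maxlgt\,\time^*$. Hence $\P[R] \le 1 - e^{-\maxlgt\time^*} \le \maxlgt\time^*$.

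The hard part is the donor piece, because the total number of HGT events in $T_s$ grows with the tree (its mean is $\lgttotalextinct = \sum_e \lgt(e)\time(e)$, which can be $\Theta(n)$), so one cannot simply bound the number of donors globally. The observation that rescues the argument is a cancellation coming from the contemporaneous, uniform choice of donor. Let $m(t)$ denote the number of branches of $T_s$ crossing time level $t$; then $\ball{x'}{R}$ consists of $m(t)$ points whenever the recipient $x'$ sits at level $t$, and the donor is uniform over these $m(t)$ points. A donor can land on $p_s(x,y)$ only at a level $t \in [t_x,t_y]$, and, conditioned on a recipient at such a level, the probability its donor is exactly the unique path point is $1/m(t)$. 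Meanwhile the expected number of recipients in the infinitesimal layer $[t,t+\ud t]$, summed over the $m(t)$ alive branches, is at most $m(t)\,\maxlgt\,\ud t$. Multiplying, the expected number of donors landing on $p_s(x,y)$ from layer $[t,t+\ud t]$ is at most $m(t)\,\maxlgt \cdot \tfrac{1}{m(t)}\,\ud t = \maxlgt\,\ud t$: the factor $m(t)$ cancels, so the bound is independent of tree size. Integrating over $t\in[t_x,t_y]$ gives $\E[\text{number of donors on } p_s(x,y)] \le \maxlgt\,\time^*$, hence $\P[D] \le \maxlgt\,\time^*$ by Markov's inequality.

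Combining the two pieces, $\P[R \cup D] \le 2\maxlgt\,\time^*$, so choosing $\maxlgt < \delta^*/(2\time^*)$ makes the probability of a recipient or donor on $p_s(x,y)$ less than $\delta^*$, which is the claim. The one point needing care is making the heuristic layer computation for the donors rigorous: I would model the donor locations as a marked Poisson process (each recipient marked by its uniformly chosen contemporaneous donor), apply the marking/mapping theorem to obtain the donor intensity, and integrate that intensity against the indicator that the donor lies on $p_s(x,y)$. The layer cancellation above is precisely the evaluation of that integral, and it is the crux of the whole claim.
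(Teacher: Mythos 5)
Your proof is correct and follows essentially the same route as the paper's: both arguments hinge on the same cancellation, namely that the donor intensity on $p_s(x,y)$ is the recipient intensity over the $m(t)$ contemporaneous branches thinned by the $1/m(t)$ uniform donor choice, yielding a rate at most $\maxlgt$ independent of tree size. The only difference is cosmetic---the paper assembles the recipient and donor locations into a single Poisson process of rate at most $2\maxlgt$ on the path and exponentiates, whereas you finish with a first-moment bound and a union bound, which is equally valid and sidesteps the independence discussion.
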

	\begin{proof}
	For all $z$ on $p_s(x,y)$, let 
	$N_z = |\ball{z}{}|$ be the number of contemporaneous
	locations to $z$. Let $z_1,\ldots,z_k$ be the locations
	on $p_s(x,y)$ where $N_z$, as a function of $z$, has jumps
	and let $N^0,\ldots,N^k$ be the values of $N_z$ on
	the segments so obtained. Let $z_0 = x$ and $z_{k+1} = y$.
	The recipient locations on $p_s(x,y)$ form a
	nonhomogeneous Poisson process with rate bounded by
	$\maxlgt$. On ther other hand, for $i = 0,\ldots, k$,
	the donor locations on
	$(z_i,z_{i+1})$ also form an independent nonhomogeneous
	Poisson process, which can be thought of as
	the superposition (over the contemporaneous branches)
	of thinned nonhomogeneous Poisson processes
	(where the thinning accounts for the choice of donor branch). The total rate of that
	process is bounded above by 
	$$
	(N^i-1)\times \frac{1}{N^i} \times \maxlgt \leq \maxlgt,
	$$ 
	where the first term on the LHS counts the number of
	contemporaneous branches to $(z_i,z_{i+1})$
	not equal to $(z_i,z_{i+1})$ (recall that, under our model, recipient and donor locations cannot
	coincide), the second term is the probability
	of picking $(z_i,z_{i+1})$ as donor, and
	the last term bounds the rate of transfer.
	The donor
	processes on $(z_i,z_{i+1})$, $i = 0,\ldots,k$, are
	independent by the independent increments property
	of Poisson processes. Hence, overall, the transfer locations
	(both recipient and donor) on $p_s(x,y)$ form a Poisson
	process with rate bounded above by $2 \maxlgt$.
	The probability of observing no transfer location
	on $p_s(x,y)$ is therefore at least $e^{-2\maxlgt\time^*}$.
	Taking $\maxlgt$ small enough gives the result.
	\end{proof}

	Fix  $\time_0 = {d_0 \over \sub}$. 
	We proceed to show the claims of the lemma.
	We first show that each gene tree distance
	satisfies the desired bound with high enough
	probability.
	\begin{itemize}
		\item \emph{Short distances:} By definition, $\weight_s(u,v) \le d_0$ implies $\time(u,v) \le \time_0$. Hence, if $x=\mrca(u,v)$, $\time(u,x)=\time(v,x) \le \time_0/2$. During the generation of a gene tree $T_g$ from $T_s$ according to the process of Definition~\ref{def:randomlgt}, by Claim~\ref{claim:no-transfer} with probability at least $0.99$ (given our operating assumption that $\maxlgt$ is small enough), no recipient locations between $x$ and $u$ or between $x$ and $v$ are chosen. Then it follows from Definition~\ref{def:randomlgt} that the resulting gene tree $T_g$ satisfies $\weight_g(u,v) = \weight_s(u,v)$, hence the distorted gene tree satisfies $\weight_g'(u,v)=\weight_s(u,v) \pm \epsilon$.
		
		\item \emph{Long distances:} Suppose $\weight_s(u,v) > d_0$. Viewing $T_s$ as a continuous object, let $x \neq y \in T_s$ be the unique points (guaranteed to exist and be distinct) such that $\weight_s(x,u)=\weight_s(y,v)=d_0/2$. It follows that, $\time(x,u), \time(y,v) \le \time_0/2$. Let now $\bar{x}$ (resp. $\bar{y}$) be the closest ancestor of $x$ (resp. $y$) that belongs to $V_s$. Then, $\time(\bar{x},u), \time(\bar{y},v) \le \time_0/2 +\maxtime$. During the generation of a gene tree $T_g$ from $T_s$ according to the process of Definition~\ref{def:randomlgt}, by Claim~\ref{claim:no-transfer} with probability at least $0.99$ (given our operating assumption that $\maxlgt$ is small enough), no recipient locations between $\bar{x}$ and $u$ or between $\bar{y}$ and $v$ are chosen. Then, by Definition~\ref{def:randomlgt}, the resulting gene tree $T_g$ will contain nodes $\bar{x}, \bar{y}$, the path between $u, v$ in $T_g$ will go through these nodes, and $\weight_g(\bar{x},u) = \weight_s(\bar{x},u)$ and $\weight_g(\bar{y},v) = \weight_s(\bar{y},v)$. Hence, $\weight_g(u,v) \ge d_0$ and the distorted tree satisfies $\weight_g'(u,v)>d_0-\epsilon$.
	\end{itemize}
	Given that a gene tree generated according to the process of Definition~\ref{def:randomlgt} satisfies the claims of the lemma with probability at least $0.99$, the lemma follows from the choice of $N$, and standard concentration bounds~\cite{MotwaniRaghavan:95}.
\end{prevproof}

\begin{prevproof}{Lemma}{lem:trees-from-distorted-metrics}
	We apply single-linkage clustering, as described above the statement of Lemma~\ref{lem:trees-from-distorted-metrics},
	\emph{up to distance $d_0-\epsilon$}. More precisely, we start with each leaf being in a cluster of its own
	with the distance $\hat{d}$ as defined in~\eqref{eq:dhat}. At each iteration, we merge the two closest clusters. When a new cluster is formed, we update $\hat{d}$ by letting the distance between the new cluster $A$ and any other remaining cluster $B$ be defined as 
	\begin{equation}\label{eq:dhat-clusters}
	\hat{d}(A,B) := \min_{a \in A, b\in B} \hat{d}(a,b).
	\end{equation}
	We stop when no pair of clusters is at distance at most $d_0 - \epsilon$.
	
	Let $\mathcal{C}_s$ be the set of all clusters of $T_s$ and 
	let $\mathcal{C}_s[M]$ be those clusters in $\mathcal{C}_s$ whose elements are at distance at most $M$ under $\weight_s$. We claim that
	the algorithm described above reconstructs
	a collection of clusters $\widehat{\mathcal{C}}$
	which satisfies
	\begin{equation}\label{eq:widehatc}
	\widehat{\mathcal{C}}
	=
	\left\{A \in \mathcal{C}_s\,:\, \min_{a_1, a_2 \in A} \hat{d}(a_1,a_2) \leq d_0 - \epsilon\right\},
	\end{equation}
	and, furthermore,
	\begin{equation}\label{eq:widehatc-sandwich}
	\mathcal{C}_s[d_0-2\epsilon] 
	\subseteq \widehat{\mathcal{C}} 
	\subseteq \mathcal{C}_s[d_0].
	\end{equation}
	Note that the sets in $\widehat{\mathcal{C}}$
	are then nested, as those in $\mathcal{C}_s[d_0]$ are nested.
	These conditions together ensure that the output 
	is equivalent to a $(d_0,\epsilon)$-truncation of $T_s$ as in Definition~\ref{def:truncation}.
	The claim follows from Lemma~\ref{lem:one step} 
	and an induction argument on the steps of the algorithm. 
	See for example~\cite[Theorem 1 (Supplementary Materials)]{Roch:10} for such an argument. We omit the details.
\end{prevproof}

\begin{prevproof}{Lemma}{lem:exists-diluted-subtree}
	Recall that, under the HGT process, a subtree
	moves away from its location in the species phylogeny
	if it is the recipient location of an HGT event. By our assumptions and Claim~\ref{claim:no-transfer}, the probability that this event occurs on any given edge of the species phylogeny is bounded by a constant, which can be made arbitrarily small. Hence, we can think of the
	subtree of $u \downarrow T_s$ which is conserved
	under the HGT process as a \emph{percolation process},
	where an edge is open (independently from the other edges) if it does not contain a recipient location of the HGT process. All other edges are said to be closed. The open subtree of $u \downarrow T_s$ then corresponds to a subtree which is shared between the species phylogeny and the gene tree. The result then follows directly by adapting Lemmas~6--8 in~\cite{Mossel:01}.
\end{prevproof}

\begin{prevproof}{Proposition}{lem:generalized one step}
	Consider the generation of gene tree $T_g$ from $T_s$. According to Lemma~\ref{lem:exists-diluted-subtree}, with probability at least $1-2 \delta$, for $\delta = \eta/4$, there exist diluted subtrees $\tilde{T}_u$ of $u \downarrow T_s$ and $\tilde{T}_v$ of $v \downarrow T_s$ that do not receive any recipient locations in the process of Definition~\ref{def:randomlgt}. We condition on this event in the remainder. By the definition of the HGT process, this means that $T_g$ contains trees $T_u$ and $T_v$ that are leafsomorphic to $\tilde{T}_u$ and $\tilde{T}_v$ respectively. Moreover, these trees are rooted at nodes $u$ and $v$ of $T_g$ (which we identify with the corresponding nodes of $T_s$). 
	In particular observe that, for any leaf $\ell_1$ of $\tilde{T}_u$ (and $T_u$), we have that 
	\begin{equation}\label{eq:general step:toroot}
	\weight_s(\ell_1,u) = \weight_g(\ell_1,u),
	\end{equation}
	and similarly for $\tilde{T}_v$.
	In addition, 
	by an analysis analogous to the proof of Lemma~\ref{lem:one step}, it follows that, with probability at least $1-2\delta$, independently w.r.t. the event considered above:
	\begin{align}
	&\text{If $\weight_s(u,v)\le d_0$, then $\weight_g(u,v) = \weight_s(u,v)$;} \label{eq:general step:close}\\
	&\text{If $\weight_s(u,v)>d_0$, then $\weight_g(u,v) > d_0$.}~~~~~~~~~~~~ \label{eq:general step:far}
	\end{align}
	Finally, given that $T_g'$ is leafsomorphic to $T_g$, it follows that $T_g'$ will contain real subtrees $T_u',T_v'$ that are leafsomorphic to $\tilde{T}_u$ and $\tilde{T}_v$, respectively. 
	
	\smallskip 
	Let $T_u''$ and $T_v''$ be arbitrary real subtrees of $T_g'$ that are leafsomorphic to some diluted subtree $\hat{T}_u$ of $u \downarrow T_s$ and some diluted subtree $\hat{T}_v$ of $v \downarrow T_s$ respectively.
	We claim that $T_u'$ and $T_u''$
	have the same root, and similarly for $T_v'$ and $T_v''$. To show this, we first notice the following:
	\begin{claim} \label{claim:little claim}
		$\hat{T}_u$ and $\tilde{T}_u$ share two disjoint paths from their common root $u$ to a pair of shared leaves. The same is true for $\hat{T}_v$ and $\tilde{T}_v$.
	\end{claim}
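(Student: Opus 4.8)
The plan is to work entirely inside the common refinement $I := \hat{T}_u \cap \tilde{T}_u$, the subtree of $T := u \downarrow T_s$ consisting of those vertices and edges lying in both diluted subtrees, and to exhibit two root-to-leaf paths of $I$ that leave $u$ through its two distinct children. Since $\hat{T}_u$ and $\tilde{T}_u$ are leafsomorphic to real subtrees of $T_g'$, their leaves are genuine (labelled) leaves of $T$, so any such paths terminate at shared genuine leaves, which is exactly what the claim asserts.

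First I would record the only structural input needed from Definition~\ref{def:diluted tree}: at every vertex $w$ of (topological) depth $\equiv 0 \pmod 3$ lying in a diluted subtree, that subtree retains all children of $w$ (depth $+1$) and all grandchildren (depth $+2$), and omits at most one great-grandchild (depth $+3$). Consequently, at every such boundary vertex $w \in I$ the intersection retains every child and every grandchild of $w$ and omits at most two of its great-grandchildren—at most one coming from each of $\hat{T}_u$ and $\tilde{T}_u$. In particular both children $c_1, c_2$ of the root $u$ lie in $I$.

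The heart of the argument is the inductive lemma that from any boundary vertex $z \in I$ there is a path in $I$ to a genuine leaf of $T$, which I would prove by induction on the height of $z \downarrow T$. If $z$, one of its children, or one of its grandchildren is already a genuine leaf, it lies in $I$ and provides the path; otherwise every grandchild is internal, and because $T$ is binary (each internal vertex has exactly two children) there are eight great-grandchildren of $z$, of which $I$ omits at most two, so at least six survive. Picking any survivor—a boundary vertex of strictly smaller height lying in $I$—the induction hypothesis supplies a path to a genuine leaf. Applying this lemma to a surviving great-grandchild of $u$ below $c_1$ and to one below $c_2$ (each side keeps at least $4-2 = 2$ of its four great-grandchildren, hence at least one, unless a genuine leaf already appears at depth $1$ or $2$ on that side) yields the two paths; they are vertex-disjoint apart from $u$ because they pass through different children, and they end at distinct genuine leaves.

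The delicate point, and the main obstacle, is guaranteeing that the descent never gets stranded at a non-genuine leaf of $I$. This is precisely where binarity is essential: were degree-$1$ internal vertices permitted, a single chain feeding a two-way split could have its two great-grandchildren omitted by $\hat{T}_u$ and $\tilde{T}_u$ respectively, leaving $I$ with no continuation below that vertex. Exactly-binary branching forces enough great-grandchildren (at least six of eight) to survive in $I$ that such stranding cannot occur, and it is this counting—together with the early-termination cases in which genuine leaves appear at depth $1$ or $2$—that must be checked with care. The argument for $\hat{T}_v$ and $\tilde{T}_v$ is identical.
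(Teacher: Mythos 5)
Your argument is correct and is essentially the paper's intended proof: the paper disposes of this claim in one line (``the Pigeonhole principle and the diluted tree degree requirements''), and your counting---each diluted subtree omits at most one of the eight great-grandchildren at a depth-$\equiv 0 \pmod 3$ vertex, so at least six survive in the intersection and at least one on each side of the root---combined with the height induction to extend these to root-to-leaf paths is exactly that pigeonhole argument written out in full. No gaps.
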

	\begin{prevproof}{Claim}{claim:little claim}
		Follows immediately by the Pigeonhole principle and the diluted tree degree requirements.
	\end{prevproof}
	Consider the pair of disjoint paths $p_1$, $p_2$ shared by $\tilde{T}_u$ and $\hat{T}_u$ by Claim~\ref{claim:little claim}. Suppose $p_1$ connects the root $u$ to some leaf $\ell_1$ and $p_2$ connects the root $u$ to some leaf $\ell_2$. Since $T_u'$ is leafsomorphic to $\tilde{T}_u$, it must also contain disjoint paths from its root to leaves $\ell_1$ and $\ell_2$. The same is true for $T_u''$, as it is leafsomorphic to $\hat{T}_u$. Since $T_u'$ and $T_u''$ are real subtrees of the same tree $T_g'$ they must have the same roots. Similarly, $T_v'$ and $T_v''$ have the same roots. 
	So we have established the first two claims of the proposition. 
	
	It remains to prove the distance claims.
	Suppose that $\weight_s(u,v) \leq d_0$. For any leaf $\ell_1$ of
	$T_u$ and $\ell_1'$ of $T_v$ we have by~\eqref{eq:general step:toroot} and~\eqref{eq:general step:close} that
	\begin{equation}\label{eq:diluted-distance-1}
	\weight_g(\ell_1,\ell_1') = \weight_s(\ell_1,\ell_1') = \weight_s(A_u,A_v).
	\end{equation}
	Suppose that $u'$ and $v'$ are the roots of $T_u'$ and $T_v'$ in $T_g'$ as defined above. 
	Let again $T_u''$ and $T_v''$ be arbitrary real subtrees of $T_g'$ that are leafsomorphic to some diluted subtree $\hat{T}_u$ of $u \downarrow T_s$ and some diluted subtree $\hat{T}_v$ of $v \downarrow T_s$ respectively. Let $\ell_2$ be any leaf of $T_u''$ and
	$\ell_2'$ be any leaf of $T_v''$. Because the roots
	of $T_u'$ and $T_u''$ and of $T_v'$ and $T_v''$
	coincide, we have further that
	\begin{equation}\label{eq:diluted-distance-2}
	\weight_g(\ell_1,\ell_1') = \weight_g(\ell_2,\ell_2'),
	\end{equation}
	where note that we used the ``undistorted'' metric $\weight_g$,
	which is ultrametric.
	On the other hand, given that $T_g'$ is an $\epsilon$-distortion of $T_g$ it follows that for all $x,y \in \{\ell_1,\ell_2,\ell_1',\ell_2'\}$, 
	\begin{equation}\label{eq:diluted-distance-3}
	\weight_g'(x,y)=\weight_g(x,y)\pm \epsilon.
	\end{equation}
	Combining Equations~\eqref{eq:diluted-distance-1},~\eqref{eq:diluted-distance-2} and~\eqref{eq:diluted-distance-3},
	we obtain
	$$
	\weight_g'(\ell_2,\ell_2') = \weight_s(A_u,A_v) \pm \epsilon,
	$$
	as desired.
	A similar argument holds if $\weight_s(u,v) > d_0$.
	We leave out the details.
\end{prevproof}

\begin{prevproof}{Proposition}{lem:identifying diluted tree}
	The diluted subtree can be computed using dynamic programming. Our algorithm proceeds from the leaves of the tree $T'$ towards the root~$u'$. Letting $T=(V,E)$ and $T'=(V',E')$, for each node $v' \in V'$, we identify whether a real subtree of $v' \downarrow T'$ is leafsomorphic to a diluted subtree of a tree $v \downarrow T$ for some node $v$ of $T$. If this is the case, we store the identity of $v$ in some set-valued function $f:V' \rightarrow 2^V$, maintaining $v\in f(v')$, if such a $v$ exists. Before explaining how to compute the function $f$, let us make an easy observation:
	\begin{lemma} \label{lem:identification}
		Suppose $T_1'$ and $T_2'$ are two real subtrees of $T'$ rooted at nodes $w_1',w_2'$ that are leafsomorphic to diluted subtrees $T_1, T_2$ of $w \downarrow T$. Then $w_1'=w_2'$.
	\end{lemma}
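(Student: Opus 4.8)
The plan is to identify both roots $w_1'$ and $w_2'$ with the most recent common ancestor in $T'$ of a single, carefully chosen pair of leaves, after which $w_1'=w_2'$ follows by uniqueness of the MRCA. The pair of leaves is supplied by Claim~\ref{claim:little claim}: applying it to the two diluted subtrees $T_1$ and $T_2$ of $w\downarrow T$ yields two disjoint paths, shared by $T_1$ and $T_2$, running from their common root $w$ to a pair of common leaves $\ell_1,\ell_2\in L$. Thus each of $T_1$ and $T_2$ contains, internally, two disjoint root-to-leaf paths reaching the \emph{same} two leaves.

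First I would transport this structure across the two leafsomorphisms. Since $T_1'$ is leafsomorphic to $T_1$ via a root- and leaf-label-preserving isomorphism, and since suppressing or reinserting degree-$2$ vertices preserves disjointness of paths, $T_1'$ must contain two disjoint paths from its root $w_1'$ to $\ell_1$ and to $\ell_2$. The same reasoning applied to $T_2'$ and $T_2$ produces two disjoint paths from $w_2'$ to the very same leaves $\ell_1,\ell_2$. Because $T_1'$ and $T_2'$ are real subtrees of the common tree $T'$, and $T'$ is itself a tree, these are genuine paths in $T'$ whose internal meeting structure is determined by $T'$ alone.

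The crux is then to show $w_1'=\mrca_{T'}(\ell_1,\ell_2)=w_2'$. Since $T_1'$ is a real subtree of $T'$ rooted at $w_1'$, every node of $T_1'$---in particular $\ell_1,\ell_2$---is a descendant of $w_1'$ in $T'$, so $w_1'$ is a common ancestor of $\ell_1$ and $\ell_2$. To see that it is the \emph{lowest} such ancestor, suppose the MRCA $m$ of $\ell_1,\ell_2$ in $T'$ were a proper descendant of $w_1'$; then $m$ would lie on the unique $T'$-path from $w_1'$ to $\ell_1$ and also on the one from $w_1'$ to $\ell_2$, so both paths would share the segment from $w_1'$ down to $m$, contradicting the fact that inside $T_1'$ they meet only at $w_1'$. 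Hence $w_1'=\mrca_{T'}(\ell_1,\ell_2)$, and the identical argument gives $w_2'=\mrca_{T'}(\ell_1,\ell_2)$; uniqueness of the MRCA then yields $w_1'=w_2'$. I expect this last step to be the only delicate one: it is precisely the hypothesis that $T_1'$ is a \emph{real} subtree rooted at $w_1'$ (pinning $w_1'$ as a common ancestor) combined with the disjointness of the two paths (forbidding any strictly lower common ancestor) that squeezes $w_1'$ onto the MRCA exactly, and one must take care that disjointness is genuinely preserved when passing through the leafsomorphism.
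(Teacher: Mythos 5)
Your proof is correct and follows essentially the same route as the paper's: both rest on the pigeonhole/degree argument producing two disjoint paths from the common root to a shared pair of leaves $\ell_1,\ell_2$, transporting them through the leafsomorphisms, and concluding that each root must be the unique branch point of $\ell_1,\ell_2$ in $T'$. Your explicit identification of that branch point as $\mrca_{T'}(\ell_1,\ell_2)$ merely spells out the final ``Hence'' that the paper leaves implicit.
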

	\begin{prevproof}{Lemma}{lem:identification}
		By the Pigeonhole principle and the degree requirements of diluted subtrees, it follows that $T_1$ and $T_2$ share two disjoint paths from their common root $w$ to a pair of shared leaves $\ell_1,\ell_2$. Since $T_1'$ is leafsomorphic to $T_1$ it must also have two disjoint paths from its root $w_1'$ to leaves $\ell_1,\ell_2$. The same is true for $T_2'$. Hence, $w_1'=w_2'$.
	\end{prevproof}
	Let us also introduce a definition.
	\begin{definition}[$3$-Ball]
		Given a node $u$ of $T$, its {\em $3$-Ball} denoted $B_T(u,3)$ is the subgraph of $T$ containing $u$'s children (if any), grandchildren (if any), and great-grandchildren (if any). A subgraph of $B_T(u,3)$ is called an {\em almost $3$-Ball of $u$} if it is the same as $B_T(u,3)$, except that it might be missing a single node at depth $3$ from the root, if any.
	\end{definition}
	
	\smallskip We are now ready to compute $f$. Given Lemma~\ref{lem:identification}, for all $v \in V$, there is at most one node $v'\in V'$ such that $v \in f(v')$. The initialization of $f$ at the leaves of $T'$ is clear:
	$$v \in f(v') \Leftrightarrow \left(\begin{minipage}[h]{6.3cm}  $v'$ is a leaf of $T'$, $v$ is a leaf of $T$, and $v, v'$ have the same labels \end{minipage}\right).$$
	For each non-leaf node $v'$ of $T'$, working our way up the tree, we initialize $f(v') =\emptyset$. Then, for all $v \in V$, we set $f(v'):=f(v') \cup \{v\}$ iff the following computation succeeds. 
	\begin{enumerate}
		\item Let ${\cal G}$ be the great-grandchildren of $v$. We check the subtree of $T'$ rooted at $v'$ to identify for each great-grandchild $w \in {\cal G}$ of $v$ its inverse $w'=f^{-1}(w)$, if any, inside the subtree. Recall that such $w'$ is unique, if it exists. If more than one great-grandchild of $v$ fail to have inverses in the subtree of $T'$ rooted at $v'$, we output {\tt failure.} Otherwise let ${\cal I}$ be the set of inverses of great-grandchildren of $v$. \label{algorithm:find I}
		\item For all leaves ${\cal L}$ in $B_T(v,3)$ that are also leaves in $T$ and are at depth $\leq 2$ from $v$, we check to see if they are also leaves in the subtree of $T'$ rooted at $v'$. If any of them fails to be a leaf in the subtree of $T'$ rooted at $v'$, we output {\tt failure}.  \label{algorithm:find L}
		\item For all subsets ${\cal I}' \subseteq {\cal I}$ of size $|{\cal I}'| = |{\cal G}|-1$, we find the minimal subtree $T''$ of $T'$ that includes nodes in ${\cal I}' \cup {\cal L} \cup \{v'\}$. If $T''$ is leafsomorphic (only preserving labels in $\cal L$) to an almost $3$-Ball of $v$ such that, whenever a node $w' \in {\cal I}'$ is mapped to a  node $w$ of the almost $3$-Ball, $w \in f(w')$, we output {\tt success}. If all tried sets ${\cal I}'$ fail (or none of the right size exists), then we output {\tt failure.} If we succeed for some ${\cal I}'$, we also store the corresponding sets ${\cal I'}$, ${\cal L}$, tree $T''$ and leafsomorphism, indexing them by $(v',v)$. (We only need to store these for  one successful ${\cal I}'$, if any.) \label{algorithm: try all possible I'}
	\end{enumerate}
	It is clear from its description that $f$ can be computed in polynomial time in the size of $T'$ and $T$, in a bottom-up fashion.

	When the computation of $f$ is over, we identify the node $u' \in V'$, if any, such that $u \in f(u')$. If no such $u'$ is found, we output that there is no real subtree of $T'$ that is leafsomorphic to a diluted subtree of $T$. If such a $u'$ is found, then we construct a real subtree of $u' \downarrow T'$ that is leafsomorphic to a diluted subtree of $T$, by picking nodes iteratively as follows:
	\begin{itemize}
		\item We pick $u'$ and associate it with $u$, if $u$ is not a leaf.
		\item For each picked node $v'$ of $T'$, we check to see if we have associated a node $v$ of $T$ with $v'$. If not, we do nothing for $v'$. If yes and $v$ is not a leaf, then:
		\begin{itemize}
			\item we pick all the nodes in the stored tree $T''$ indexed by $(v',v)$;
			\item for all nodes in the set ${\cal I}'$ indexed by $(v',v)$ we associate them with their corresponding nodes in $T$ according to the leafsomorphism indexed by $(v',v)$.
		\end{itemize}
	\end{itemize}
	Clearly the above procedure takes time linear in all stored information.
	
	Let us now justify the correctness of the computation of $f$, as well as the returned subtree of $T'$, if any. The correctness of the computation of $f$ can be shown inductively from the leaves. Clearly, the values computed for the leaves are correct. Inductively, suppose all values at the subtree rooted at $v'$ have been computed correctly. Let us argue that the value computed for $v'$ is also correct. 
	
	\begin{itemize}
		\item {\bf No false-negatives.} Suppose there exists a diluted subtree $T_v$ of $v \downarrow T$ that is leafsomorphic to a real subtree $T_{v'}'$ of $v' \downarrow T'$. We will argue that when processing nodes $v',v$ our algorithm will add $v$ to $f(v')$. 
		
		Consider the set ${\cal G}$ of great-grandchildren of $v$. The diluted tree definition implies that there exists a subset ${\cal G}' \subseteq {\cal G}$ of great-grandchildren of $v$ of size $|{\cal G}'|\ge |{\cal G}|-1$, such that all great-grandchildren in ${\cal G}'$ are included in $T_v$. Moreover, by the definition of a diluted tree, the subtree $T_w$ of $T_v$ rooted at some node $w \in {\cal G}'$ is a diluted subtree of $w \downarrow T$. Restricting the leafsomorphism between $T_v$ and $T_{v'}'$ to the subtree $T_w$ of $T_v$, we obtain a subtree $T_{w'}'$ of $T_{v'}'$ rooted at some descendant $w'$ of $v'$ that is leafsomorphic to $T_w$. Hence, assuming by induction that the value of $f$ at $w'$ has been computed correctly, $w \in f(w')$. So Step~\ref{algorithm:find I} of our algorithm will not declare~{\tt failure}, and correctly compute set ${\cal I}$. 
		
		Next, consider the set $L_v$ of all leaves in $T$ at (topological) distance at most $2$ from $v$. It is clear that Step~\ref{algorithm:find L} of our algorithm will set ${\cal L}=L_v$. Moreover, since $T_{v'}'$ is leafsomorphic to $T_v$ it must be that all leaves in ${\cal L}$ are descendants of $v'$ in $T'$. So Step~\ref{algorithm:find L} will also not declare {\tt failure}. 
		
		Finally, for each great-grandchild $w \in {\cal G}'$ that is not a leaf in $T$, let us pick two leaves $\ell_1^w, \ell_2^w$ in $T_w$ such that the paths from $\ell_1^w,\ell_2^w$ to $w$ are disjoint. 
		Such pair of leaves is guaranteed to exist by the diluted tree requirements. Since $T_w$ and $T_{w'}'$ are leafsomorphic, $\ell_1^w, \ell_2^w$ also belong to $T_{w'}'$, where $w' = f^{-1}(w)$, and the paths from these leaves to $w'$ are also disjoint. If $w \in {\cal G}'$ is a leaf in $T$, set $\ell_1^w=\ell_2^w = w$. Now consider the set of leaves ${\cal L}'={\cal L} \cup \left(\cup_{w \in {\cal G}'}\{\ell_1^w,\ell_2^w\}\right)$. These leaves are a subset of the leaves of $T_v$ and $T_{v'}'$. Let ${T}_v|{\cal L}'$ be the restriction of $T_v$ to leaves ${\cal L}'$, i.e. the minimal subtree of $T_v$ that contains the nodes in ${\cal L}' \cup \{v\}$. Since $T_v$ is leafsomorphic to $T_{v'}'$, it must be that ${T}_v|{\cal L}'$ is leafsomorphic to ${T}_{v'}'|{\cal L}'$. By our choice of leaves $\ell_1^w, \ell_2^w$, this means that the tree $T''$ constructed in Step~\ref{algorithm: try all possible I'} will be deemed leafsomorphic to an almost $3$-Ball of $v$. So our algorithm will output {\tt Success}.
		
		\item {\bf No false-positives.} Conversely, we show that, if our algorithm adds $v$ to $f(v')$, then it must be that a real subtree of $v' \downarrow T'$  is isomorphic to a diluted subtree of $v \downarrow T$. This follows almost immediately from the description of our algorithm. For $v$ to be included in $f(v')$, it must be that when our algorithm processes $v', v$, it finds out that all but at most one $w \in {\cal G}$ (the set of great-grandchildren of $v$) have an inverse $f^{-1}(w)$ that is a descendant of $v'$ in $T'$, and moreover, all children and grandchildren of $v$ that are leaves in $T$ are also descendants of $v'$ in $T'$. Let  ${\cal I}$ be the set of inverses computed in Step~\ref{algorithm:find I}, and let ${\cal L}$ be the set of leaves computed in Step~\ref{algorithm:find L}. In Step~\ref{algorithm: try all possible I'} our algorithm finds a subset ${\cal I}' \subseteq {\cal I}$ of size $|{\cal G}|-1$ such that the minimal subtree $T''$ of $T'$ that contains the nodes in ${\cal I}' \cup {\cal L} \cup \{v'\}$ is leafsomorphic (only preserving labels in $\cal L$) to an almost $3$-Ball $AB(v,3)$ of $v$. Do the following operation on $T''$  and $AB(v,3)$: For each leaf $w'$ of $T''$ that belongs to ${\cal I}'$, root at $w'$ a real subtree of $w' \downarrow T'$ that is leafsomorphic to a diluted subtree $T_w$ of $w \downarrow T$, where $w$ is the leaf of $AB(v,3)$ such that $w \in f(w')$. Also, root $T_w$ at leaf $w$ of $AB(v,3)$. Call ${\rm grown}(T'')$ and ${\rm grown}(AB(v,3))$ the trees resulting from the above operations. It is clear that ${\rm grown}(T'')$ and ${\rm grown}(AB(v,3))$ are leafsomorphic, ${\rm grown}(AB(v,3))$ is a diluted subtree of $v \downarrow T$, and ${\rm grown}(T'')$ is a real subtree of $v' \downarrow T'$. So we did well to include $v$ to $f(v')$.
	\end{itemize}

	Given that the computation of $f$ is correct, it is clear from the above analysis that determining a real subtree of $T'$ that is isomorphic to a diluted subtree of $T$ is also done correctly.
\end{prevproof}

\section{Algorithmic result: $\epsilon$-contractions}
\label{sec:proofs-contractions}

In this section we provide the proof of our main algorithmic result, Theorem~\ref{thm:main1b}. (In particular, unlike the previous section, we do {\it not} assume here that the substitution rate is constant.)
Throughout this section, our {\bf Operating Assumptions} are the following:
We are given $\epsilon$-contractions $T_{g_1}',\ldots,T_{g_N}'$ of gene trees $T_{g_1},\ldots,T_{g_N}$, generated independently according to the random HGT model of Definition~\ref{def:randomlgt} from a species phylogeny $T_s = (V_s,E_s;\root,\time)$ with rates of horizontal transfer $\lgt(e)$ and rates of substitution $\sub_g(e)$ satisfying the bounded rates model of Definition~\ref{def:brm}. We assume that $0 \leq \epsilon < \mintime\minsub$. Additionally $N \ge C \log n$ for a large enough constant $C$, and $\maxlgt$ a small enough constant, as required by all the lemmas established in this section. In particular, we will skip stating these assumptions in the statements of all lemmas.

We let $d_s(u,v)$, $d_g(u,v)$ and $d_g'(u,v)$ denote the graph distances
between $u$ and $v$ on $T_s$, $T_g$ and $T_g'$ respectively.
(Under $d_s(u,v)$, we ignore the root of $T_s$.)
Recall that a {\em cherry} is a pair of leaves at graph distance $2$,
that is, a pair of leaves that are ``siblings.'' Similarly to Section~\ref{sec:proofs}, the proof contains several steps:
\begin{enumerate}
	\item {\bf Reconstructing cherries:}
	The first key
	idea is to show that, for each pair of 
	leaves at ``short distance'' in the species
	phylogeny, the median {\em graph} distance across 
	the genes is {\em equal} to the actual graph distance
	in the species phylogeny with high probability (Lemma~\ref{lem:one step-contracted}). We then use the median 
	to reconstruct
	the cherries of the species phylogeny (Lemma~\ref{lem:trees-from-distorted-metrics-contracted}).
	
	\item {\bf Going deeper into the tree:} We then
	bootstrap the previous argument to reach
	deeper into the species phylogeny.
	We adapt
	the diluted approach of Section~\ref{sec:proofs}
	to identify corresponding vertices
	in the gene trees and in the reconstructed parts
	of the species phylogeny. We then
	show how to use such diluted subtrees to estimate
	the graph distance between close-by pairs of vertices 
	deep inside the reconstructed phylogeny (Proposition~~\ref{lem:generalized one step-contracted}).
	
	\item {\bf Computing diluted trees and recursing:} Following the dynamic programming approach of Section~\ref{sec:proofs}
	to compute diluted subtrees,
	the final details of the proof
	are described in Section~\ref{sec:proof-finish-contracted}
	where the main induction step is implemented.
	
\end{enumerate}

The proofs of the lemmas below can be found
in Section~\ref{sec:proofs-lemmas-distorted-contracted}.

\subsection{Reconstructing cherries}

We first show how to reconstruct ``short distances'' from the contracted gene trees. 
We use the fact that, at small enough
rates of HGT, the path between two close-by
leaves is unlikely to be the site of an HGT event. 
We also show that median distance estimates of 
``long distances'' are guaranteed to exceed a 
threshold. Compared to Lemma~\ref{lem:one step}
in the distorted case,
there is a new complication. Because we work with
graph distance we must ensure that, at short distances,
not only is there no transfer on the path between
the leaves of interest, but also that the ``subtrees
hanging from that path'' continue to have a representative 
among the leaves.
\begin{lemma}[Median distances are accurate estimates
	of short distances] \label{lem:one step-contracted}
	For any constant integer $d_0 >0$, under our operating assumptions, for all $u,v \in L$, the following are true with probability at least $1-{1\over {\rm poly}(n)}$:
	\begin{enumerate}
		\item \label{lem:one step:close-contracted} \emph{Short distances.} 
		If $d_s(u,v)\le d_0$, then 
		${\rm median}_{i =1,\ldots,N}\{d_{g_i}'(u,v) \} = d_s(u,v);$
		\item \label{lem:one step:far-contracted} \emph{Long distances.} If $d_s(u,v)>d_0$, then 
		${\rm median}_{i =1,\ldots,N}\{d_{g_i}'(u,v) \} > d_0.$ 
	\end{enumerate}
\end{lemma}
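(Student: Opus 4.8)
The plan is to establish both items for a single gene tree $T_g$ with probability at least a fixed constant (say $0.99$), and then promote this to the median statement by a Chernoff bound over the $N=\Omega(\log n)$ independent genes, followed by a union bound over the $O(n^2)$ pairs $u,v$. The single-gene analysis rests entirely on Claim~\ref{claim:no-transfer}: on any ancestor--descendant segment of bounded $\time$-length, no recipient or donor location falls with probability as close to $1$ as we like, once $\maxlgt$ is small. The genuinely new feature compared with Lemma~\ref{lem:one step} is that graph distance is sensitive to the suppression of degree-$2$ vertices: even if the path from $u$ to $v$ survives intact, an internal vertex of that path is suppressed in the $\epsilon$-contraction unless one of its hanging subtrees keeps a leaf attached to it in $T_g$. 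Working in the other direction, since $\epsilon<\mintime\minsub$, every full (untransferred) species edge has weight $\ge\mintime\minsub>\epsilon$ and is therefore never contracted, so intact segments keep their exact edge count.

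For the short-distance case $d_s(u,v)\le d_0$, I would first freeze the entire $u$--$v$ path. Writing $x=\mrca(u,v)$, ultrametricity and the bounded-rates model give $\time(u,x)=\time(v,x)\le d_0\maxtime/2$, so the whole path has bounded $\time$-length and, by Claim~\ref{claim:no-transfer}, carries no recipient or donor location with probability $\ge 1-\delta$; this preserves the path as $d_s(u,v)$ full species edges and forbids any graft from inserting a new internal vertex on it. I then must keep every internal path vertex $w_i$ at degree $\ge 3$, and here ultrametricity is again decisive: each $w_i$ lies within $\time$-distance $d_0\maxtime/2$ of the leaves below it, so some leaf $\ell_i$ of the subtree hanging off the path at $w_i$ is at bounded $\time$-distance from $w_i$, and applying Claim~\ref{claim:no-transfer} to the segment $w_i\to\ell_i$ keeps $\ell_i$ attached through $w_i$. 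A union bound over the path and its $\le d_0$ internal vertices shows that, with probability $\ge 0.99$, the path survives with exactly $d_s(u,v)$ uncontracted edges and no suppressed internal vertex; contraction of off-path edges only merges a $w_i$ with an off-path neighbour and cannot change the count, so $d_g'(u,v)=d_s(u,v)$.

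For the long-distance case $d_s(u,v)>d_0$, I would exhibit two disjoint intact segments whose edge counts already exceed $d_0$. Let the species path be $u=a_0,a_1,\dots,b_1,b_0=v$ with apex $\mrca(u,v)$, and fix $\bar x=a_{m_u}$ on the $u$-side and $\bar y=b_{m_v}$ on the $v$-side with $m_u+m_v=d_0+1$; this is possible since the two sides together have $d_s(u,v)\ge d_0+1$ edges. As in the short case I freeze the segments $u\to\bar x$ and $v\to\bar y$ and preserve a hanging leaf at each internal vertex, so each survives the contraction with its full edge count. The remaining danger---and the main obstacle---is that HGT relocates $v$ (or its recent ancestor) into a subtree hanging off $u\to\bar x$, so the gene-tree path branches off early and is short; this cross-grafting does not arise for the weighted metric, where the through-the-graft distance is pinned to twice a $\time$-depth by ultrametricity and the graft point is forced deep enough. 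To rule it out I would additionally condition, again via Claim~\ref{claim:no-transfer} on bounded-$\time$ segments, on no recipient lying within $\time$-distance $\time(u,\bar x)$ above $v$ and none within $\time(v,\bar y)$ above $u$. Since a graft of $v$ into the $u$-segment needs a recipient pruning $v$ at a $\time$-depth contemporaneous with a donor in that segment---hence within $\time(u,\bar x)$ of the leaf level---this forbids it (and symmetrically for $u$). Consequently the gene-tree MRCA of $u$ and $v$ sits at or above both $\bar x$ and $\bar y$, the $u$--$v$ path runs through both protected segments, and $d_g'(u,v)\ge m_u+m_v=d_0+1>d_0$.

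All events conditioned on in either case concern segments of $\time$-length $O(d_0\maxtime)$, a constant, so each holds with probability arbitrarily close to $1$ once $\maxlgt$ is a small enough constant, and a union bound over the $O(d_0)$ of them gives the single-gene probability $\ge 0.99$. Since in the good event a single gene reports $d_g'(u,v)=d_s(u,v)$ in the short case and $d_g'(u,v)>d_0$ in the long case, and good genes form a strict majority, a standard concentration bound over the $N=\Omega(\log n)$ genes makes the median equal $d_s(u,v)$, resp.\ exceed $d_0$, with probability $1-1/\poly(n)$; a final union bound over all pairs $u,v$ finishes the argument. I expect the cross-grafting step to demand the most care, both in the $\time$-depth bookkeeping that ties donor depth to recipient depth through contemporaneity, and in verifying that the contraction step cannot silently shorten the two protected segments.
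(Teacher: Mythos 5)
Your proof follows essentially the same route as the paper's: Claim~\ref{claim:no-transfer} freezes the $u$--$v$ path, a hanging leaf is preserved at each internal path vertex so that it keeps degree at least $3$ and survives the suppression of degree-$2$ vertices, the condition $\epsilon<\mintime\minsub$ guarantees intact edges are never contracted, and concentration plus a union bound over pairs finish the argument. Your long-distance analysis is actually more detailed than the paper's (which omits it with ``details are left out''), and your treatment of cross-grafting via contemporaneity and ultrametricity is sound, modulo the minor bookkeeping of whether $\bar x$ and $\bar y$ themselves survive suppression (easily fixed by taking $m_u+m_v=d_0+3$, or by running the short-distance argument with threshold $d_0+2$ when $d_0<d_s(u,v)\le d_0+2$).
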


Our reconstruction algorithm first reconstructs
all cherries using Lemma~\ref{lem:one step-contracted}.
Then it proceeds by reconstructing ``cherries of cherries,''
and so forth. 
\begin{figure}[b!]
	\centering
	\includegraphics[width=4in]{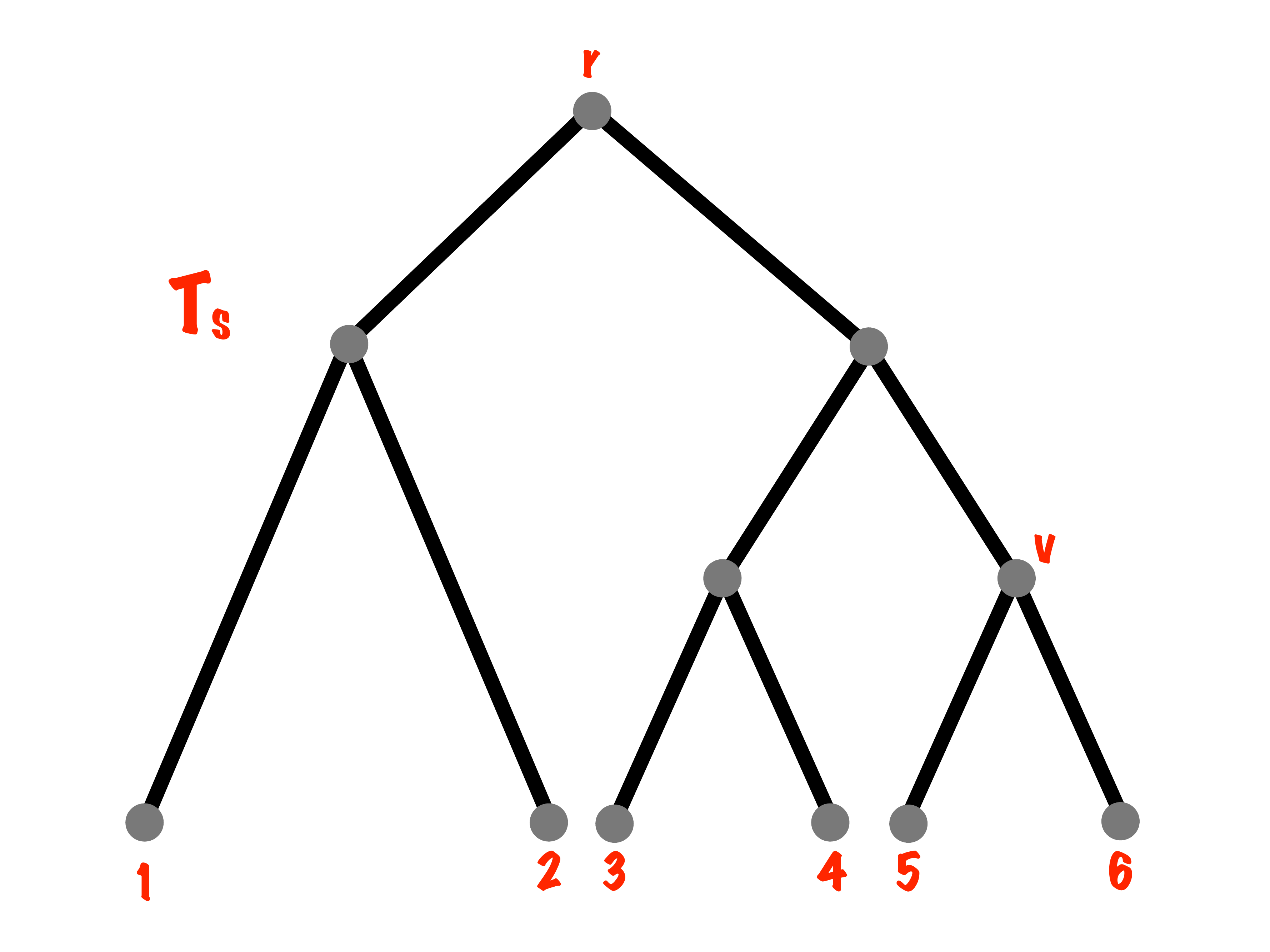}
	\caption{
		A species tree.
	}\label{fig:species-tree}
\end{figure}
\begin{figure}[t!]
	\centering
	\includegraphics[width=6.5in]{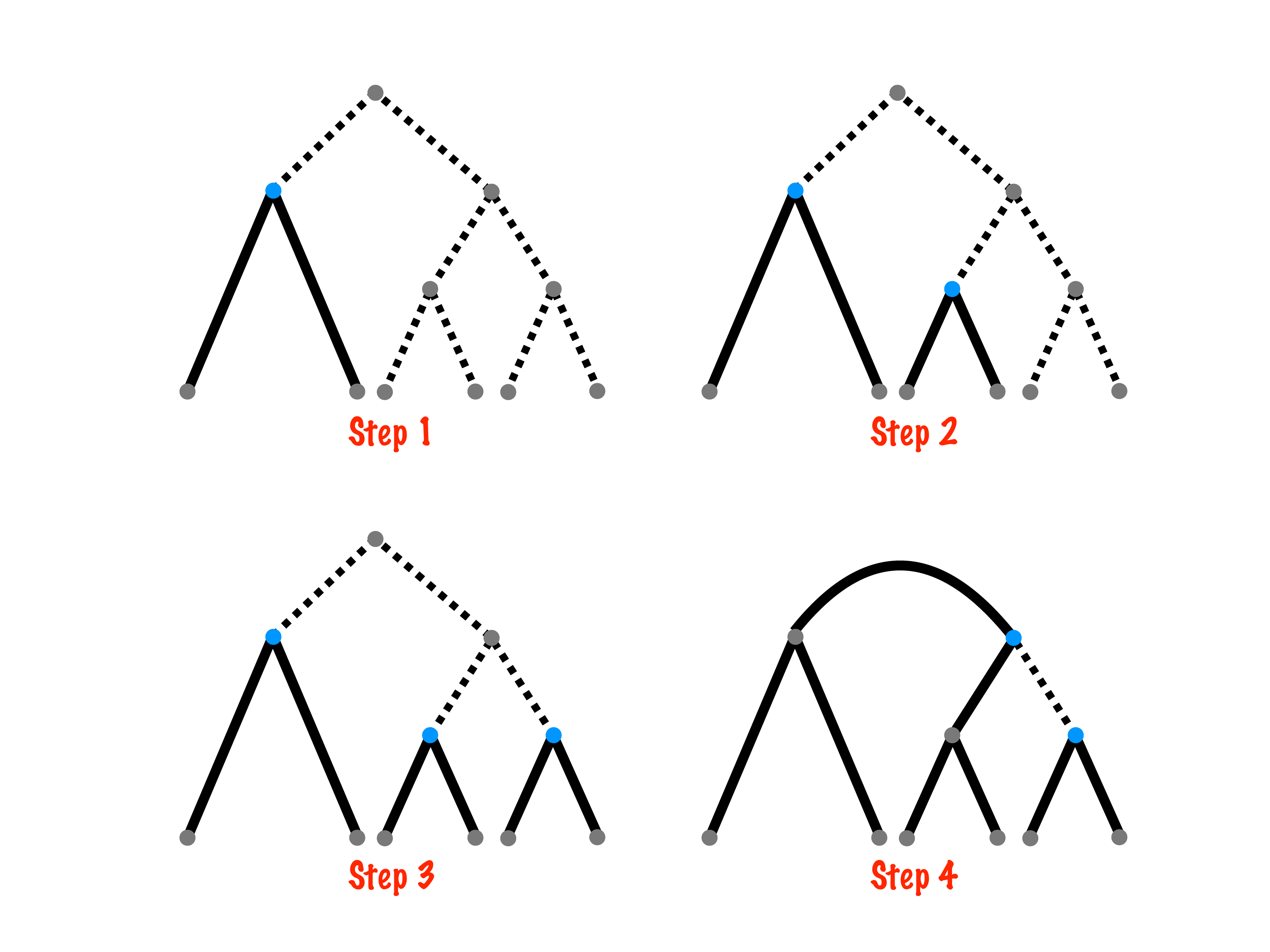}
	\caption{
		Steps of the reconstruction algorithm for the species tree $T_s$ in Figure~\ref{fig:species-tree}. Roots of
		the prunings are in blue. Note that in the
		last step, re-rooting $T_s$ at $v$ shows that 
		the left subtree in the pruning satisfies the
		fullness condition.
	}\label{fig:algo}
\end{figure}
However, as the illustration of our reconstruction
method in Figures~\ref{fig:species-tree} and~\ref{fig:algo}
shows, we cannot guarantee that the reconstruction 
is consistent with the rooting in the species tree.
Instead, we maintain what we call a {\em pruning},
as defined next.
\begin{definition}[Pruning of a phylogeny]
	\label{def:pruning-contracted}
	Given a phylogeny $T_s = (V_s,E_s;\root,\time)$ with leaf-set $L=[n]$ and some integer $D > 0$, a {\em $D$-pruning of $T_s$} is a collection $\forest$ of rooted subtrees $T_i=(V_i,E_i;\root_i)$, $i=1,\ldots,\ell$, of $T_s$ satisfying the following properties:
	\begin{itemize}
		\item \emph{Disjoint forest.} The trees $T_i$, $i=1,\ldots,\ell$, are disjoint,
		that is, do not share edges as subtrees of $T_s$.
		
		\item \emph{Size.} The number of edges in $\forest$,
		as a subforest of $T_s$, is $D$. 
		
		\item \emph{Fullness.} For all $i=1,\ldots,\ell$, the
		tree $T_i$, as a subtree of $T_s$, is {\em full}. That is,
		there is a neighbor $\root_i'$ of $\root_i$ in $T_s$
		such that, if
		$T_s$ were (re-)rooted at $\root_i'$, then $T_i$ would correspond exactly to
		the subtree of $T_s$ rooted at $\root_i$.
		
	\end{itemize}
\end{definition}

We reconstruct an initial pruning using the median estimator:
\begin{equation}\label{eq:dhat-contracted}
\forall u,v \in L,\qquad \hat{d}(u,v) := {\rm median}_{i =1,\ldots,N}\{d_{g_i}'(u,v) \}.
\end{equation}
Formally:
\begin{lemma}[Building an initial pruning] \label{lem:trees-from-distorted-metrics-contracted}
	Let $D$ be twice the number of cherries of $T_s$.
	Assume that $\hat{d}$, as defined in~\eqref{eq:dhat-contracted}, satisfies the statement of
	Lem\-ma~\ref{lem:one step-contracted}. Then a $D$-pruning of $T_s$ can be computed in polynomial-time. 
\end{lemma}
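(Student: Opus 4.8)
The plan is to read the cherries of $T_s$ directly off the median graph distance $\hat d$ and to output, as the desired $D$-pruning, the collection of cherry-subtrees. First I would apply Lemma~\ref{lem:one step-contracted} with the constant integer $d_0 = 2$. Since distinct leaves of a binary tree are at graph distance at least $2$, and a pair $\{u,v\}$ is a cherry exactly when $d_s(u,v) = 2$, the two parts of that lemma yield the clean dichotomy that, for every pair, $\hat d(u,v) = 2$ when $\{u,v\}$ is a cherry and $\hat d(u,v) > 2$ otherwise. (This uses the assumed guarantee; the per-pair statement of Lemma~\ref{lem:one step-contracted} is made simultaneous over all $O(n^2)$ pairs by a union bound.) Hence the cherries of $T_s$ are precisely the pairs with $\hat d(u,v) = 2$, and they can be identified in polynomial time by computing the $O(n^2)$ medians, each over the $N$ gene-tree distances.

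Next I would assemble the forest. For each identified cherry $\{u,v\}$ with common parent $p$ in $T_s$, form the rooted subtree $T_i$ consisting of $p$ together with the two pendant edges $(p,u)$ and $(p,v)$, rooted at $\root_i = p$; combinatorially this is just a fresh root with children $u$ and $v$. Let $\forest$ be the collection of all such subtrees, one per cherry.

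It then remains to verify the three defining properties of a $D$-pruning. Disjointness is immediate: each leaf has a unique sibling, so distinct cherries have disjoint leaf-sets, and their subtrees (which use only pendant edges incident to distinct parents) share no edge of $T_s$. For the size condition, each cherry-subtree contributes exactly two edges, so with $c$ cherries the subforest has $2c = D$ edges, as required. Fullness is the one condition deserving care: for a cherry $\{u,v\}$ with parent $p$, note that for $n \ge 3$ the node $p$ has degree $3$, hence a third neighbor $\root_i'$ (its parent in $T_s$); taking that $\root_i'$ and re-rooting $T_s$ at it makes the subtree rooted at $p$ equal to exactly $\{p,u,v\} = T_i$, so every cherry-subtree is full.

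I do not anticipate a deep obstacle: given Lemma~\ref{lem:one step-contracted}, the statement reduces to reading cherries off a table of integer medians. The only points requiring attention are confirming that the threshold $d_0 = 2$ separates cherries from all other leaf pairs exactly---which rests on $2$ being the minimum leaf-to-leaf graph distance in a binary tree---and verifying fullness, which holds automatically for cherry-subtrees via the re-rooting argument above. All steps run in polynomial time.
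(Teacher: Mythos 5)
Your proposal is correct and matches the paper's argument, which likewise sets $d_0=2$, reads the cherries off the pairs with $\hat d(u,v)=2$, and takes $\forest$ to be the collection of cherry-subtrees. The paper states this in three lines; your verification of disjointness, the edge count $2c=D$, and fullness via re-rooting at the cherry parent's third neighbor just makes explicit what the paper leaves implicit.
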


\subsection{Reaching Deeper into the Past} \label{sec:deeper-contracted}

We now show how to bootstrap the median estimator in~\eqref{eq:dhat-contracted}. 
We rely on the concepts of diluted and conserved subtrees
defined in Section~\ref{sec:deeper}.
As in the previous subsection, the main new
hurdle is that the use of the graph distance
requires controlling the HGTs in the
``subtrees hanging from the path'' between
two internal vertices of interest. We use
ideas from percolation for that purpose.
\begin{proposition}[Induction step: Diluted subtrees and distance estimates] \label{lem:generalized one step-contracted} 
	Let $\forest = \{T_1,\ldots,T_\ell\}$ be a pruning of $T_s$.
	Con\-sider 
	constants $d_0, \eta>0$ and a pair of distinct roots $\root_i, \root_j$ of $\forest$ (with respective trees $T_i$ and $T_j$). Under our operating assumptions, a contracted gene tree $T_g'$ satisfies the following with probability at least $1-\eta$:
	\begin{itemize}
		\item \emph{Diluted subtree at $\root_i$.} $T_g'$ contains a real subtree $T_i'$ rooted at some node $\root_i'$ that is leafsomorphic to a diluted subtree of $T_i$; moreover, any such subtree $T_i''$ has the same root $\root_i'$;
		\item \emph{Diluted subtree at $\root_j$.} $T_g'$ contains a real subtree $T_j'$ rooted at some node $\root_j'$ that is leafsomorphic to a diluted subtree of $T_j$; moreover, any such subtree $T_j''$ has the same root $\root_j'$.
	\end{itemize}
	Moreover, for any such subtrees $T_i''$ and $T_j''$:
	\begin{itemize}
		\item \emph{Short distances.} 
		If $d_s(\root_i,\root_j)\le d_0$ then $d_g'(\root_i',\root_j') 
		= d_s(\root_i,\root_j)$; \label{lem:general step:close-contracted}
		\item \emph{Long distances.} 
		If $d_s(\root_i,\root_j)>d_0$ 
		then $d_g'(\root_i',\root_j') > d_0$. \label{lem:general step:far-contracted}
	\end{itemize}
\end{proposition}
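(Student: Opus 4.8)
The plan is to mirror the proof of Proposition~\ref{lem:generalized one step}, replacing the ultrametric distance $\weight_g'$ by the graph distance $d_g'$ and importing the percolation ingredient announced for Lemma~\ref{lem:one step-contracted}. First I would establish the existence and the ``same root'' clauses. Applying Lemma~\ref{lem:exists-diluted-subtree} to the full subtrees $T_i$ and $T_j$ of the pruning $\forest$---whose fullness (Definition~\ref{def:pruning-contracted}) lets us view each as a rooted subtree of $T_s$, after re-rooting at the neighbor $\root_i'$ if necessary---yields, with probability at least $1-2\delta$ for each root (taking $\delta$ a small fraction of $\eta$), diluted subtrees $\tilde T_i$ of $T_i$ and $\tilde T_j$ of $T_j$ that receive no recipient location. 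By the definition of the HGT process these appear leafsomorphically in $T_g$ rooted at $\root_i$ and $\root_j$, hence in $T_g'$ as real subtrees $T_i',T_j'$ rooted at some $\root_i',\root_j'$. The uniqueness of these roots is exactly the argument of Claim~\ref{claim:little claim} and Lemma~\ref{lem:identification}: any two diluted subtrees of $T_i$ share two disjoint root-to-leaf paths (Pigeonhole plus the degree requirements of Definition~\ref{def:diluted tree}), so their leafsomorphic images in the single tree $T_g'$ share two disjoint paths to a common pair of leaves and must have the same root. This gives both ``moreover'' clauses.

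For the distance estimates I would condition on an additional \emph{conservation} event controlled by Claim~\ref{claim:no-transfer}. In the short case $d_s(\root_i,\root_j)\le d_0$, I would require that the whole path $p_s(\root_i,\root_j)$---which has at most $d_0$ edges, hence $\time$-length at most $d_0\maxtime$---receive \emph{no recipient or donor} location; by Claim~\ref{claim:no-transfer} this holds with probability close to $1$ for $\maxlgt$ small. The path is then intact in $T_g$, and, because Claim~\ref{claim:no-transfer} also forbids \emph{donors} on the path, nothing is regrafted onto it and no HGT shortcut can appear. It remains to ensure that contraction does not suppress the internal path vertices, which is the genuinely new ingredient: for each internal vertex $w$ of the path, the third (hanging) subtree $S_w$ must keep at least one descendant leaf in $T_g$, which fails only if every leaf of $S_w$ is pruned away by a recipient on its connecting path. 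Declaring an edge open when it carries no recipient, this is precisely the percolation-survival event underlying Lemma~\ref{lem:exists-diluted-subtree}, whose probability is close to $1$ uniformly in the shape of $S_w$ when $\maxlgt$ is small. On the intersection of these events the path survives contraction edge-for-edge, so $d_g'(\root_i',\root_j')=d_s(\root_i,\root_j)$.

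The long case $d_s(\root_i,\root_j)>d_0$ is the main obstacle, and I would handle it by the same forced-traversal idea while guarding against a \emph{teleportation} of $\root_j$'s region toward $\root_i$. Here I would conserve a graph-ball of radius $d_0+1$ around $\root_i$: by Claim~\ref{claim:no-transfer} (applied to the constantly many edges of this neighborhood) no recipient or donor lands inside it, and by the percolation argument above every internal vertex of the ball keeps a hanging leaf. The no-recipient part keeps the ball intact in $T_g$; the no-donor part is exactly what prevents $\root_j$'s pruned region from being regrafted anywhere inside the ball (so $\root_j'$ cannot be teleported in); and the percolation part keeps the ball from collapsing under contraction, so graph distances inside it agree with those in $T_s$. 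Consequently the unique $T_g'$-path from $\root_i'$ to $\root_j'$ either reaches $\root_j'$ at its true (hence $>d_0$) distance inside the ball or must exit this intact, non-collapsing neighborhood, crossing more than $d_0$ edges; either way $d_g'(\root_i',\root_j')>d_0$.

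Finally I would collect the failure probabilities---two applications of Lemma~\ref{lem:exists-diluted-subtree}, one application of Claim~\ref{claim:no-transfer}, and a union bound over the constantly many (for fixed $d_0$) hanging subtrees in the percolation step---and choose $\delta$ and $\maxlgt$ small enough, depending only on $\eta$ and $d_0$, that the total is at most $\eta$. The delicate points, beyond the percolation/contraction-survival ingredient itself, are that Claim~\ref{claim:no-transfer} controls donor as well as recipient locations (without the donor control the long-distance bound would fail through a contemporaneous regraft), and the bookkeeping that the hanging-leaf survival probability is uniformly close to $1$ over subtree shapes, so the union bound over the relevant vertices stays below $\eta$.
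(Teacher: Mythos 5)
Your proposal is correct and follows essentially the same route as the paper's proof: the existence and same-root clauses are imported from the argument for Proposition~\ref{lem:generalized one step}, and the graph-distance claims are obtained by conserving the path $p_s(\root_i,\root_j)$ via Claim~\ref{claim:no-transfer} together with a percolation-survival argument ensuring each hanging subtree retains a leaf so that no internal path vertex is suppressed under contraction. The paper only sketches the long-distance case (``the other case is similar''), so your ball-of-radius-$d_0+1$ argument and your explicit remark that the donor control in Claim~\ref{claim:no-transfer} is what rules out regrafting into the conserved neighborhood are welcome elaborations rather than departures.
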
 

\subsection{Computing Diluted Subtrees}

It remains to show how to compute diluted subtrees.
\begin{proposition}[Induction step: Computing diluted subtrees] \label{lem:identifying diluted tree-contracted}
	Given some node $\root_i'$ of 
	$T_g'$ and a tree $T_i$ from a pruning $\forest$ of $T_s$,
	we can identify in polynomial-time a real subtree of $T_g'$
	rooted at $r_i'$ that is leafsomorphic to a diluted subtree of $T_i$, if such a subtree exists in $T_g'$.
\end{proposition}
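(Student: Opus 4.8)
The plan is to reduce to the dynamic program already developed in the proof of Proposition~\ref{lem:identifying diluted tree}. The only structural differences between the present setting and that of Proposition~\ref{lem:identifying diluted tree} are that (i) here $T_g'$ is \emph{unrooted}, whereas Proposition~\ref{lem:identifying diluted tree} took a rooted tree $T'$, and (ii) because $T_g'$ is obtained by contracting edges, its internal vertices may have degree strictly larger than $3$, so that once rooted its internal vertices have outdegree $\ge 2$ rather than exactly $2$ as was assumed there. Neither difference will turn out to be essential. First I would root $T_g'$ at the given node $r_i'$, orienting every edge away from $r_i'$; call the resulting rooted tree $\widehat{T}_g'$. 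A real subtree of $T_g'$ rooted at $r_i'$ is then exactly a real subtree of $\widehat{T}_g'$ having $r_i'$ as its root, and, by the fullness of $T_i$ (which forces $T_i$ to be a \emph{binary} rooted tree with root $r_i$), a diluted subtree of $T_i$ is a diluted subtree of a binary rooted tree. Thus the problem becomes: does $\widehat{T}_g'$ contain a real subtree rooted at $r_i'$ leafsomorphic to a diluted subtree of $T_i$, and if so exhibit one? This is precisely the problem solved by Proposition~\ref{lem:identifying diluted tree} with $T = T_i$ and $T' = \widehat{T}_g'$, except that now we only need to test the single pair $(r_i', r_i)$ rather than search over all candidate roots.

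Next I would run, verbatim, the bottom-up computation of the set-valued map $f : V_g' \to 2^{V_i}$ together with its top-down reconstruction, exactly as in the proof of Proposition~\ref{lem:identifying diluted tree}. The key point to verify is that this machinery never used the binarity of $T'$. Indeed, the uniqueness statement (Lemma~\ref{lem:identification}) relies only on the Pigeonhole principle applied to the degree requirements of the \emph{diluted} tree, which lives in the binary tree $T_i$; and Steps~\ref{algorithm:find I}--\ref{algorithm: try all possible I'} refer to $\widehat{T}_g'$ only through descendant relations, inverse images $f^{-1}(\cdot)$, and leafsomorphism of the minimal spanning subtree $T''$ to an almost $3$-Ball of a node of $T_i$. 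Since $T_i$ is binary, each candidate $v$ has at most eight great-grandchildren, so $|{\cal G}| \le 8$, the number of subsets ${\cal I}'$ tried is constant, and each almost $3$-Ball comparison is a constant-size leafsomorphism check; hence the running time remains polynomial in $|V_g'|$ and $|V_i|$.

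The correctness argument (the ``no false-negatives'' and ``no false-positives'' cases) then transfers essentially line by line. The only new remark I would make is that, at a vertex of $\widehat{T}_g'$ whose outdegree exceeds $2$, the minimal subtree spanning ${\cal I}' \cup {\cal L} \cup \{v'\}$ automatically discards the irrelevant extra branches, so that the leafsomorphism test against the (binary) almost $3$-Ball is decided correctly; and conversely, the stored top-down reconstruction simply selects, at each such high-degree vertex, the two branches dictated by the recorded leafsomorphism, yielding a genuine real subtree. Thus the presence of outdegree larger than $2$ in $\widehat{T}_g'$ is harmless.

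The one thing to be careful about — and the only place the contraction genuinely enters — is to keep in mind that this proposition is \emph{purely computational}: it finds a subtree if one exists. I would emphasize that we need not argue that contractions preserve any diluted subtree. If the contraction has fused branching vertices of the gene tree and thereby destroyed the three-level structure required of a diluted subtree of $T_i$, then simply no real subtree rooted at $r_i'$ is leafsomorphic to a diluted subtree of $T_i$, and the algorithm correctly reports failure; the existence of such a subtree, when it is needed, is supplied separately by Proposition~\ref{lem:generalized one step-contracted}. With these observations in place, querying whether $r_i \in f(r_i')$ and, if so, running the stored top-down reconstruction yields the desired real subtree in polynomial time. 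I expect the main (though modest) obstacle to be the bookkeeping needed to confirm that rooting the unrooted $T_g'$ and allowing internal outdegree greater than $2$ do not disturb any step of the original dynamic program, rather than any new combinatorial idea.
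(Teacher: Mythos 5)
Your proposal is correct and takes essentially the same approach as the paper, which likewise reduces to the dynamic program of Proposition~\ref{lem:identifying diluted tree}; the only cosmetic difference is that the paper handles the unrootedness by rooting $T_g'$ at each neighbor $r_i''$ of $r_i'$ in turn and applying the procedure to $T_i$ and $r_i' \downarrow T_g'$, whereas you root at $r_i'$ itself and let the root have outdegree greater than $2$. Your added remarks---that the dynamic program tolerates internal outdegree larger than $2$ after contraction, and that the proposition is purely computational with existence supplied separately by Proposition~\ref{lem:generalized one step-contracted}---are correct and in fact spell out details that the paper's two-line proof leaves implicit.
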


\subsection{Phylogenies from Contracted Gene Trees}
\label{sec:proof-finish-contracted}

Using Propositions~\ref{lem:generalized one step-contracted} and~\ref{lem:identifying diluted tree-contracted}, we are now ready
to prove Theorem~\ref{thm:main1b}.

\smallskip

\begin{prevproof}{Theorem}{thm:main1b}
	For every pair $\root_i, \root_j \in V_s$ and full, disjoint subtrees $T_i$, $T_j$ of $T_s$ rooted respectively at $\root_i$ and $\root_j$, it follows from Proposi\-tion~\ref{lem:generalized one step-contracted} and standard concentration inequalities~\cite{MotwaniRaghavan:95} that, with probability at least $1-{1 \over {\rm poly}(n)}$:
	\begin{equation}\label{eq:final2-contracted}
	d_s(\root_i,\root_j)\le d_0 \implies
	{\rm median}_{\ell \in N_{\root_i,\root_j}}\{d_{g_\ell}'(\root_i,\root_j) \} = d_s(\root_i,\root_j);
	\end{equation} 
	\begin{equation}\label{eq:final1-contracted}
	d_s(\root_i,\root_j)>d_0 \implies
	{\rm median}_{\ell \in N_{\root_i,\root_j}}\{d_{g_\ell}'(\root_i,\root_j) \} > d_0;
	\end{equation} 	
	where $N_{\root_i,\root_j}$ is the subset of contracted gene trees that contain a diluted subtree of $T_i$ and of $T_j$. 
	Since there are $O(n^2)$ pairs of $u,v \in V_s$, by a union bound, Equations~\eqref{eq:final2-contracted} and~\eqref{eq:final1-contracted} simultanenously hold for all pairs of $u,v \in V_s$, with probability at least $1-{1 \over {\rm poly}(n)}$. We condition on this event.

	We now describe our high-level reconstruction
	algorithm. We proceed similarly to
	Lemma~\ref{lem:trees-from-distorted-metrics-contracted}.
	But whenever a new subtree is formed, we identify diluted subtrees of this subtree in the contracted gene trees and use these diluted subtrees to estimate inter-root distances
	using the median as above. More precisely:
	\begin{enumerate}
		\item 
		For all $u \in [n]$, let $T_{\{u\}}$ be the tree
		composed of only $u$, with root $\rho_{\{u\}} = u$.
		Let $\forest = \{T_{\{u\}}\,:\, u \in [n]\}$. Set $\hat{d}$ as in~\eqref{eq:dhat-contracted}. 
		
		\item Until the number of edges in $\forest$ is $2n - 4$:
		\begin{enumerate}
			\item Let $T_A, T_B$ be two subtrees in $\forest$ with $\hat{d}$-distance between their roots equal to $2$.
			\item Let $T_{A\cup B}$ be the tree obtained by attaching the trees $T_A$ and $T_B$ at the roots by
			a cherry.
			Let $\rho_{A\cup B}$ be the 
			root of $T_{A\cup B}$, that is, the middle vertex of the new cherry.			
			\item Update $\mathcal{F}$ by removing
			$T_A,T_B$ and adding $T_{A\cup B}$.
			\item For each gene $i \in N$, compute 
			a real subtree $\tilde{T}_{A\cup B}^i$ of $T_{g_i}'$ that is leafsomorphic to a diluted subtree of $T_{A\cup B}$, if such a subtree exists,
			as detailed in the proof of Proposition~\ref{lem:identifying diluted tree-contracted}.
			Let $\tilde{\rho}_{A\cup B}^i$ be the root of $\tilde{T}_{A\cup B}^i$.
			\item Update: for each
			$T_F \in \mathcal{F}$ with $F \neq A\cup B$,
			set
			$$
			\hat{d}(\rho_F,\rho_{A\cup B})
			:= {\rm median}_{i \in N_{\rho_F,\rho_{A\cup B}}}\{d_{g_i}'(\tilde{\rho}_{F}^i,\tilde{\rho}_{A\cup B}^i)\}, 
			$$
			where $N_{\rho_F,\rho_{A\cup B}}$ is the subset of contracted gene trees that contain a diluted subtree of $T_F$ and of $T_{A\cup B}$.
		\end{enumerate}
		\item Add an edge connecting the roots of the two trees in $\forest$.
	\end{enumerate}
	
	At initialization, $\forest$ is a $0$-pruning. 
	By~\eqref{eq:final2-contracted} and~\eqref{eq:final1-contracted},
	at each step we identify a cherry of $T_s$
	where the trees in $\forest$ have been pruned.
	Such a cherry always exists because, by fullness
	and disjointess of the pruning, the above operation produces
	a binary tree. Moreover, after adding a cherry
	as described in the algorithm, we preserve
	fullness and disjointness and the number of edges
	in $\forest$ grows by $2$. The process therefore
	terminates with the topology of $T_s$ in a 
	polynomial number of steps.
	That concludes the proof.
\end{prevproof}

\subsection{Proofs}
\label{sec:proofs-lemmas-distorted-contracted}


\begin{prevproof}{Lemma}{lem:one step-contracted}
	Fix  $\time_0 = d_0 \maxtime $. As the proof is similar
	to that of Lemma~\ref{lem:one step}, we only summarize the argument.
	\begin{itemize}
		\item \emph{Short distances:} Suppose that $d_s(u,v) \le d_0$. Then $\time(u,v) \le d_0 \maxtime  = \time_0$. Arguing as in the proof of Lemma~\ref{lem:one step},
		by Claim~\ref{claim:no-transfer} for $\maxlgt$ small enough there is no transfer on $p_s(u,v)$ with probability at least $0.99$. Note, however, that this is
		not enough to guarantee the result. In particular, if an entire subtree hanging from the path from $u$ and $v$ is tranferred away, then the ancestor of this subtree on $p_s(u,v)$ is not present as a vertex in the gene tree, which decreases the graph distance between $u$ and $v$. We argue that such problematic transfers do not occur with high probability. Let $x=\mrca(u,v)$, let $w_0, w_1,\ldots,w_\ell$
		be the vertices on $p_s(u,v)$ with $w_0 = u$ and $w_\ell = v$, and note that $\ell \leq d_0$. For $i = 1,\ldots,k-1$ such that $w_i \neq x$, let also $y_i$ be a leaf descendant of $w_i$ such that the path $p_s(w_i,y_i)$ does not
		intersect $p_s(u,v)$. For $i$ such that $w_i = x$,
		let $y_i$ be a leaf descendant of the parent of $x$
		such that $p_s(y_i,w_i)$ does not
		intersect $p_s(u,v)$.
		Then,
		letting $y_0 = y_\ell = x$, we have
		$$
		\max\{\time(w_i,y_i)\,:\,i=0,\ldots,\ell\} 
		\le \time_0/2+ 2 \maxtime,
		$$
		where the worst case is achieved for $w_i = x$ when all branch lengths are $\maxtime$.
		Arguing as in the proof of Lemma~\ref{lem:one step},
		by Claim~\ref{claim:no-transfer} for $\maxlgt$ small enough there is no transfer on $p_s(w_i,y_i)$ for 
		all $i=0,\ldots,\ell$ with probability at least $0.99$.
		Then it follows that the resulting gene tree $T_g$ satisfies $d_g'(u,v) = d_g(u,v) = d_s(u,v)$.

		\item \emph{Long distances:} The argument in the
		case of long distances is similar to the proof
		of Lemma~\ref{lem:one step}, as modified in the 
		short distances case above. The details are left out.
	\end{itemize}
	The lemma then follows from standard concentration bounds~\cite{MotwaniRaghavan:95}.
\end{prevproof}

\begin{prevproof}{Lemma}{lem:trees-from-distorted-metrics-contracted}
	We take $d_0 = 2$. Lemma~\ref{lem:one step-contracted} immediately implies that all cherries of $T_s$ can be identified with high probability. The collection $\forest$
	of these cherries forms a $D$-pruning.
\end{prevproof}

\begin{prevproof}{Proposition}{lem:generalized one step-contracted}
	The first part of the claim follows from the argument in
	the proof of Proposition~\ref{lem:generalized one step}.
	For the second part, we argue similarly to Lemma~\ref{lem:one step-contracted}. We only detail the short distances case.
	The other case is similar.
	
	Fix  $\time_0 = \maxtime d_0$.
	Suppose that $d_s(\root_i,\root_j) \le d_0$. 
	Then $\time(\root_i,\root_j) \le \time_0$. 
	Let $w_0, w_1,\ldots,w_\ell$
	be the vertices on $p_s(\root_i,\root_j)$ 
	with $w_0 = \root_i$ and $w_\ell = \root_j$ 
	and note that $\ell \leq d_0$.
	For $i = 1,\ldots,k-1$, 
	let $Y_i$ be the subtree of $T_s$ hanging from $p_s(\root_i,\root_j)$ at $w_i$ (where we think
	of $T_s$ as being unrooted). By Claim~\ref{claim:no-transfer},
	for $\maxlgt$ small enough, the
	probability of a transfer on any given edge of $Y_i$
	can be made arbitrarily small. If we imagine running
	a percolation process on $Y_i$ where an edge is
	open if there is no transfer involving that edge,
	then with constant probability arbitrarily close to $1$
	(for $\maxlgt$ small enough) there exists an open
	path from $w_i$ to some leaf $y_i$ of $Y_i$. We choose
	$\maxlgt$ such that this holds with probability
	at least $1-\eta/2$ for 
	all $i=0,\ldots,\ell$. Then it follows that the resulting gene tree $T_g$ satisfies $d_g'(\root_i',\root_j') = d_g(\root_i',\root_j') = d_s(\root_i,\root_j)$.
\end{prevproof}

\begin{prevproof}{Proposition}{lem:identifying diluted tree-contracted}
	For each neighbor $r_i''$ of $r_i'$ in $T_g'$,
	root $T_g'$ at $r_i''$ and apply the procedure
	described in Proposition~\ref{lem:identifying diluted tree}
	to $T_i$ and $r_i' \downarrow T_g'$.
\end{prevproof}

\section{Impossibility result}
\label{sec:proofs-2}

We now prove Theorem~\ref{thm:main2}.
Similarly to~\cite{RochSnir:12},
our improved impossibility result
uses a coupling argument. Specifically,
we run the HGT processes jointly on
two different phylogenies simultaneously 
and show that
they output the same gene tree with high 
probability. See, e.g.,~\cite{Lindvall:92}
for more on coupling. Our construction
also uses percolation on trees techniques.
See e.g.~\cite{Peres:99} for more on percolation.

\smallskip

\begin{prevproof}{Theorem}{thm:main2}
	Fix $\rho_\lambda = 0$, $\rho_\tau = 1$,
	and $\bar{\tau} = 1$. 
	Let $T$ be a complete binary tree with
	$n = 2^H$ leaves labeled $\{1,\ldots,n\}$
	and with fixed edge lengths $\bar{\tau}$. 
	Let $\overline{T}$ be the same tree as $T$
	with the same leaf labels,
	except for the following change: in
	the canonical planar representation of 
	$T$, swap the first and third subtrees, $\mathcal{T}_1$ and $\mathcal{T}_3$, of $T$
	on level $\frac{2}{3} \log_2 n$ (which for simplicity we assume 
	is integer-valued) from the
	root. Denote by $L_1$ and $L_3$ the respective leaf sets of $\mathcal{T}_1$ and $\mathcal{T}_3$ in $T$.
	Similarly, we let $\overline{\mathcal{T}}_1$ and $\overline{\mathcal{T}}_3$ be the first and
	third subtrees on level $\frac{2}{3} \log_2 n$
	of $\overline{T}$ with respective
	leaf sets $L_3$ and $L_1$.
	Observe that
	$|L_1| = |L_3| = n^{1/3}$.
	Fix $\lambda(e) = 0$ for all edges not in
	$\mathcal{T}_1$ and $\mathcal{T}_3$ and 
	let $\lambda(e) = \bar{\lambda}$
	for all edges in $\mathcal{T}_1$ and $\mathcal{T}_3$.
	Do the same on $\overline{T}$.
	
	We couple the HGT processes in $T$ and
	$\overline{T}$ as follows. We first run the process
	on $T$. The HGT events in $\overline{T}$ 
	are picked as follows: any transfer in $T$
	can be described by the leaf set $L_R$ of the recipient
	location, the leaf set $L_D$ of the donor location
	and the distance from the root; for any such transfer
	in $T$, we perform the exact same transfer in $\overline{T}$, i.e., using the same distance from the root 
	and the same sets $L_R$, $L_D$. Observe that this
	is always possible because at any fixed time for the root, $T$ and $\overline{T}$ share 
	the same subtrees, although some of them are
	arranged differently. By symmetry, this process
	is then
	a coupling of the two HGT processes. 
	We show below that, with probability at least 
	$1-1/2N$, the produced gene trees
	are identical. That implies the theorem.
	
	We make a series of claims. 
	\begin{itemize}
		\item {\bf No in-moves.}
		Note that only the subtrees of $\mathcal{T}_1$/
		$\mathcal{T}_3$ and $\overline{\mathcal{T}}_1$/ $\overline{\mathcal{T}}_3$ can be transferred,
		as the HGT rate is $0$ everywhere else.
		Let $L_{-\{1,3\}} = [n] - L_1\cup L_3$.
		We define two types of transfer. In an
		\emph{out-move}, $L_D \subseteq L_{-\{1,3\}}$. 
		In an \emph{in-move}, $L_D \subseteq L_1\cup L_3$.
		By the definition of the process,
		for any given transfer, the probability
		that it is an in-move is $2/n^{2/3}$.
		For a constant $0 < \bar{\lambda} < +\infty$, the total HGT weight $\lgttotalextinct$ of $\mathcal{T}_1$ and $\mathcal{T}_3$
		is $\Theta(n^{1/3})$. 
		Here we used that a binary tree with
		$n^{1/3}$ leaves has $O(n^{1/3})$
		edges. 
		Because the
		number of transfers is Poisson
		with mean $\lgttotalextinct$, for any
		$\alpha > 0$,
		the probability that more than
		$n^{1/3 + \alpha}$ transfers occur
		overall is at most $O(n^{-\alpha})$
		by Markov's inequality~\cite{MotwaniRaghavan:95}. 
		The probability that any transfer is
		an in-move is then at most
		$O(n^{-\alpha} + n^{-1/3+\alpha})
		= O(n^{-1/6})$
		by the law of total probability,
		where we chose $\alpha = 1/6$.
		Let $\mathcal{E}_1$ be the event
		that there is no in-move. 
		
		\item {\bf Existence of a cut.}
		We say that there is \emph{transfer
			cut} in $\mathcal{T}_1$
		if, for each leaf $\ell_1$ in
		$\mathcal{T}_1$, there is at least one
		transfer on the path between $\ell_1$
		and the root of $\mathcal{T}_1$; and
		similarly for $\mathcal{T}_3$. 
		Let $p_{\bar{\lambda}}$ 
		be the probability that a transfer
		occurs on an edge of $\mathcal{T}_1$.
		Note that, by choosing $\bar{\lambda}$
		large enough (but constant), we can
		make $p_{\bar{\lambda}}$ to be a constant as close to $1$
		as we desire. We associate to the
		HGT process a percolation process to show that,
		for $\bar{\lambda}$ large enough,
		a transfer cut exists in both
		$\mathcal{T}_1$ and $\mathcal{T}_3$
		with high probability. Consider $\mathcal{T}_1$. We say that an edge
		of $\mathcal{T}_1$ is \emph{closed} if 
		it contains the recipient location
		of at least one transfer. Otherwise
		it is \emph{open}. Let $L_1'$ be the subset
		of $L_1$ connected to the root of
		$\mathcal{T}_1$ by an open path.
		Because each edge is open
		independently with probability $1-p_{\bar{\lambda}}$, 
		the expected size of $L_1'$ is 
		$(1-p_{\bar{\lambda}})^{H'} n^{1/3}$ 
		where $H' = \frac{1}{3} \log_2 n$.
		By Markov's inequality again, the probability
		that $L_1'$ is non-empty, i.e., that
		$|L_1'| \geq 1$, is at most
		$(1-p_{\bar{\lambda}})^{H'} n^{1/3}
		= O(n^{-1/6})$ by choosing
		$\bar{\lambda}$ 
		to be a large enough constant.
		The same holds for $\mathcal{T}_3$.
		Let $\mathcal{E}_2$ be the event
		that there is a transfer cut
		in both $\mathcal{T}_1$ and
		$\mathcal{T}_3$. 
		
		\item {\bf Same output.} Condition
		on the events $\mathcal{E}_1$ and
		$\mathcal{E}_2$, which are guaranteed
		to occur simultaneously with
		probability at least $1 - O(n^{-1/6})$.
		The existence of transfer cuts 
		and the absence of in-moves
		imply that all leaves of $\mathcal{T}_1$
		and $\mathcal{T}_3$ (and similarly for 
		$\overline{\mathcal{T}}_1$ and
		$\overline{\mathcal{T}}_3$) 
		have been transferred into
		the shared part of $T$ and $\overline{T}$
		(and have possibly subsequently moved 
		\emph{within}
		the shared part).
		Because under our coupling the donor locations of the
		transfers are chosen to be the same in
		$T$ and $\overline{T}$, the output
		gene trees are then identical.
		
	\end{itemize}	
	That concludes the proof.
\end{prevproof}

\clearpage

\bibliographystyle{alpha}
\bibliography{own,thesis,RECOMB12}



\end{document}